\theoremstyle{plain}
\newtheorem{thm}{Theorem}[section]
\newtheorem{lem}[thm]{Lemma}
\newtheorem{prop}[thm]{Proposition}
\newtheorem{cor}[thm]{Corollary}
\newtheorem{rem}[thm]{Remark}
\theoremstyle{definition}
\numberwithin{equation}{section} \errorcontextlines=0
\DeclareMathOperator{\im}{Im}
\DeclareMathOperator{\sgn}{sgn}
\def\D{\mathcal{D}}
\def\E{\mathcal{E}}
\def\E{\mathcal{E}}
\def\H{\mathcal{H}}
\def\c{\overrightarrow{\mathbf{c}}}
\def\r{\overrightarrow{\mathbf{r}}}
\def\M{\mathcal{M}}
\def\A{\mathcal{A}}
\def\N{\mathcal{N}}
\newcommand{\Rmnum}[1]{\expandafter\@slowromancap\romannumeral #1@}
\newcommand{\la}{\lambda}
\begin{document}
\title{Plethystic Murnaghan--Nakayama rule via vertex operators}
\author{Yue Cao}
\address{School of Mathematics, South China University of Technology, Guangzhou, Guangdong 510640, China}
\email{434406296@qq.com}
\author{Naihuan Jing}
\address{Department of Mathematics, North Carolina State University, Raleigh, NC 27695, USA}
\email{jing@ncsu.edu}
\author{Ning Liu}
\address{Beijing International Center for Mathematical Research, Peking University, Beijing 100871, China}
\email{mathliu123@outlook.com}
\subjclass[2010]{Primary: 05E10, 05E05; Secondary: 17B69}\keywords{plethysm, Murnaghan--Nakayama rule, Schur functions, vertex operators}

\maketitle
\begin{abstract}
  Based on the vertex operator realization of the Schur functions, a determinant-type plethystic Murnaghan--Nakayama rule is obtained and utilized to derive a general formula of the expansion coefficients of $s_{\nu}$ in the plethysm product  $(p_{n}\circ h_{k})s_{\mu}$. Meanwhile, the equivalence between our algebraic rule and the combinatorial one is also established. As an application, we provide a simple way to compute the generalized Waring formula.
\end{abstract}

\tableofcontents

\section{Introduction}
The Murnaghan--Nakayama rule \cite{Mur, Nak1, Nak2} is an iterative combinatorial formula for irreducible character values
 $\chi^{\lambda}_{\mu}$ of the symmetric group in terms of ribbon tableaux. Under the Frobenius characteristic map, it is equivalent to
 a statement on Schur symmetric functions:
 \begin{align}\label{e:MN}
 p_{r}s_{\mu}=\sum_{\lambda}(-1)^{ht(\lambda/\mu)}s_{\lambda},
 \end{align}
where $ p_{r}$ is the $r$th power sum symmetric function, $s_{\lambda}$ is the Schur function labeled by
partition $\lambda$, and the sum runs over all partitions $\lambda$ such that $\lambda/\mu$ is a border strip of size
$r$. Here $ht(\lambda/\mu)$ is the height of the skew diagram $\lambda/\mu$.

The plethystic Murnaghan--Nakayama rule was introduced and proved in \cite{DLT} using Muir's rule. The combinatorial rule states that
\begin{align}\label{t:c}
(p_n\circ h_k)s_{\mu}=\sum_{\lambda}\sgn(\lambda/\mu)s_{\lambda}
\end{align}
summed over all partitions $\lambda$ such that $\lambda/\mu$ is a horizontal $n$-border strip of weight $k$. Here $\sgn(\la/\mu)$ is defined in \eqref{e:sgn}. Clearly it generalizes both
the classical Murnaghan--Nakayama rule $(k=1)$ and the Pieri rule $(n=1)$. An alternative proof was furnished in \cite{EPW} using the character theory of the symmetric group, and a direct combinatorial proof was given in \cite{W}.

Recently there is a renewed interest on the Murnaghan--Nakayama rule for various algebras
such as the Hecke algebra and the Hecke-Clifford algebra (cf. \cite{JL1,JL2}).
The aim of this paper is to derive a determinant-type plethystic Murnaghan--Nakayama rule. Our approach utilizes vertex operator realization of symmetric functions \cite{J1, J2}. We express the coefficient of $s_{\lambda}$ in $(p_n\circ h_k)s_{\mu}$ by determinants, and we obtain an algebraic formulation of Murnaghan--Nakayama rule, which states
\begin{align}\label{e:algMN}
(p_{n}\circ h_{k})s_{\mu}=\sum_{\lambda\vdash |\mu|+nk}\det (\M(\lambda/\mu))s_{\lambda}.
\end{align}
%where $\lambda$ runs through partitions such that $\lambda/\mu$ is a horizontal $n$-border strip of weight $k$

One advantage of \eqref{e:algMN} is that 
the matrix $\M(\lambda/\mu)$ is defined by data of the partitions (see Sec. \ref{s:pMN}, also \eqref{e:m_{ij}}),
thus offers an effectively algebraic method to compute the plethystic
Murnaghan-Nakayama rule. In addition, by defining column and row transformations of the matrix, a combinatorial characterization for the determinant of $\M(\lambda/\mu)$ is given. This combinatorial description is related to the horizontal $n$-border strip of weight $k$, which explicitly reads
\begin{align}\label{e:Comb.Descrip.}
\det(\M(\la/\mu))=
\begin{cases}
\sgn(\la/\mu) & \text{if $\la/\mu$ is a horizontal $n$-border strip of weight $k$}\\
0 & \text{otherwise},
\end{cases}
\end{align}
which shows the equivalence between our algebraic formula and the combinatorial form given in \cite{DLT,W}. The derivation of \eqref{e:Comb.Descrip.} relies crucially on a new characterization of the horizontal $n$-border strip of weight $k$, which we will present in Sec. \ref{s:prelim}.

We remark that a spin counterpart of the plethystic Murnaghan--Nakayama rule is studied in the separate paper \cite{CJL} using the vertex operator realization of the Schur $Q$-functions.
%a plethystic Murnaghan--Nakayama rule for the Schur $Q$-functions is studied in a separate paper \cite{CJL}.

The layout of the paper is as follows. In Sec. \ref{s:prelim}, we review some basic concepts and give a new characterization of a horizontal $n$-border strip of weight $k$ (Prop. \ref{p:characterization}). In Sec. \ref{s:pMN}, the main results will be presented. We firstly introduce the vertex operator $V^{(n)}(z)$ and its adjoint operator $V^{(n)*}(z)$, and then based on the usual techniques of vertex operators, we express the coefficients $a^{\lambda}_{(n,k)\mu}$ of $s_{\lambda}$ in the expansion of $(p_n\circ h_k)s_{\mu}$, which derives the determinant-type plethystic Murnaghan--Nakayama rule (Thm. \ref{t:dpM-N}). This allows us compute $a^{\lambda}_{(n,k)\mu}$ via determinant $\det(\M(\la/\mu))$. Therefore, we can easily deduce the combinatorial version by establishing a combinatorial description for $\det(\M(\la/\mu))$. At the end of this section, a determinant-based method is offered to compute the generalized Waring formula (Cor. \ref{c:Waring}).
%As an application, Section 4 provides a simple generalized Waring formula, at the same time, gives a general formula for the coefficient of $s_{\lambda}$ in $p_{\mu}\circ h_m$ (Theorem \ref{t:b}) and a general formula for that of $s_{\lambda}$ in $h_{n}\circ h_m$ (Cor. \ref{t:a}).

\section{Preliminaries}\label{s:prelim}
%\section{Some  basics: partitions, $n$-border strips, horizontal strips and plethysm} \label{s:prelim}
In this section, we firstly give a new characterization of a horizontal $n$-border strip of weight $k$, and then review some basic notions regarding symmetric functions and plethysm. The standard reference are \cite{Mac,Stanley}.
\subsection{Partitions}
A {\em partition} $\lambda=\left(\lambda_{1},\lambda_{2},\cdots ,\lambda_{t}\right)$ of {\em length} $\ell(\lambda)=t$ is a list of $t$ non-increasing (nonzero) integers.
The {\em weight} of $\lambda$ is $|\lambda|=\sum_{i=1}^{t}\lambda_{i}$, so we usually write $\lambda \vdash|\lambda|$. Let $P_{n} =\{\lambda\in P: |\lambda|=n\}$, where $P$ is the set of partitions. We identify a partition $\lambda$ with its {\em Young diagram} $D(\lambda)=\{(i, j)\in P\times P : 1\leq i\leq \lambda_{i}, 1\leq j\leq t\}$. Sometime we write $\lambda=\left(1^{m_{1}(\lambda)}, 2^{m_{2}(\lambda)},\cdots\right)$, where $m_{i}(\lambda)$ is the number of times that $i$ appears among the parts of $\lambda$. For $\lambda\vdash n$ denote
\begin{align}\label{e:2-1}
z_{\lambda}=\prod_{i\geq  1}i^{m_{i}(\lambda)}m_{i}(\lambda)!
\end{align}
The {\em conjugate} $\lambda^{\prime}$ of a partition $\lambda$ obtained by changing rows to columns in $D(\lambda)$, so its Young diagram $D(\lambda^{\prime})=\{(j, i)\in P\times P :(i, j)\in D(\lambda)\}$.
 A {\em composition} $\alpha=\left(\alpha_{1}, \alpha_{2}, \ldots, \alpha_{l}\right)$ is a list of nonnegative integers, denoted as $\alpha\models|\alpha|=\sum_{i=1}^{l}\alpha_{i}$.

Let $\lambda, \mu\in P$ such that $\lambda \supset \mu$, i.e., $\lambda_i\geq \mu_i$ for all $i$. The skew Young diagram $\lambda / \mu$ is the part of the diagram $\lambda$ obtained by removing that of $\mu$, so $(\lambda/\mu)_i=\lambda_i-\mu_i$.
%the set difference between the corresponding diagrams of $\lambda$ and $\mu$.
A skew Young diagram $\lambda / \mu$ is an {\it $n$-border strip} if it consists of $n$ connected boxes and does not contain any $2 \times 2$ square $\boxplus$. Its {\it height} (or {\it spin}) is the number of the rows minus 1.
Geometrically an $n$-border strip is a string of $n$ boxes with the starting box $s$ and the ending box $e$ (see Fig. 1).
%For convenience, we write the starting point as $s$ and the terminal point as $t$.
\begin{figure}[H]
 \centering
 \scalefont{0.8}
 \begin{tikzpicture}[scale=0.4]
    \coordinate (Origin)   at (0,0);
    \coordinate (XAxisMin) at (0,0);
    \coordinate (XAxisMax) at (9,0);
    \coordinate (YAxisMin) at (-3,0);
    \coordinate (YAxisMax) at (0,-9);
    \draw [thin, black] (-0.75,0) -- (-0.25,0);
     \draw [thin, black] (-0.5,0) -- (-0.5,-6);
    \draw [thin, black] (-0.75,-6) -- (-0.25,-6);
    \draw [thin, black] (4,0) -- (9,0);
    \draw [thin, black] (4,-1) --(9,-1);
    \draw [thin, black] (3,-2) -- (5,-2);
    \draw [thin, black] (1,-3) -- (5,-3);
    \draw [thin, black] (1,-4) -- (4,-4);
    \draw [thin, black] (0,-5) -- (2,-5);
    \draw [thin, black] (0,-6) -- (2,-6);
    \draw [thin, black] (0,-5) -- (0,-6);
     \draw [thin, black] (1,-3) -- (1,-6);
     \draw [thin, black] (2,-3) -- (2,-6);
     \draw [thin, black] (3,-2) -- (3,-4);
     \draw [thin, black] (4,0) -- (4,-4);
      \draw [thin, black] (5,0) -- (5,-3);
     \draw [thin, black] (6,0) -- (6,-1);
     \draw [thin, black] (7,0) -- (7,-1);
     \draw [thin, black] (8,0) -- (8,-1);
     \draw [thin, black] (9,0) -- (9,-1);
     \draw [thin, black] (4.5,-0.5) -- (8.5,-0.5);
     \draw [thin, black] (4.5,-0.5) -- (4.5,-2.5);
     \draw [thin, black] (3.5,-2.5) -- (4.5,-2.5);
     \draw [thin, black] (3.5,-2.5) -- (3.5,-3.5);
     \draw [thin, black] (1.5,-3.5) -- (3.5,-3.5);
     \draw [thin, black] (1.5,-3.5) -- (1.5,-5.5);
     \draw [thin, black] (0.5,-5.5) -- (1.5,-5.5);
  \node[inner sep=2pt] at (8.5,-0.5) {\textcolor{blue}{\bf$\bullet$}};
  \node[inner sep=2pt] at (8.5,0.5) {$s$};
  \node[inner sep=2pt] at (0.5,-5.5) {$\bullet$};
   \node[inner sep=2pt] at (0.5,-6.5) {$e$};
  \node[inner sep=2pt] at (-1,-3) {$6$};
     \end{tikzpicture}
    \caption{The path of $\lambda/\mu$ with the starting box $s$ and the ending box $e$, and $spin(\lambda/\mu)=5$.}
    \end{figure}
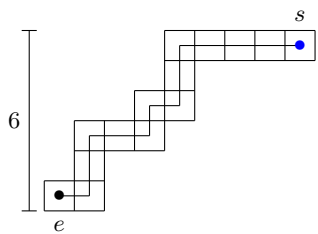

Let $(\lambda/\mu)\uparrow$ be the horizontal strip made of the top boxes of the columns of $\lambda/\mu$ (see Fig. 2).
 \begin{figure}[H]
\begin{tikzpicture}[scale=0.4]
    \coordinate (Origin)   at (0,0);
    \coordinate (XAxisMin) at (0,0);
    \coordinate (XAxisMax) at (9,0);
    \coordinate (YAxisMin) at (0,0);
    \coordinate (YAxisMax) at (0,-9);
    \draw [thin, black] (6,0) -- (11,0);
    \draw [thin, black] (6,-1) --(11,-1);
    \draw [thin, black] (3,-2) -- (7,-2);
    \draw [thin, black] (3,-3) -- (7,-3);
    \draw [thin, black] (0,-4) -- (4,-4);
    \draw [thin, black] (0,-5) -- (4,-5);
    \draw [thin, black] (0,-6) -- (1,-6);
    \draw [thin, black] (0,-4) -- (0,-6);
     \draw [thin, black] (1,-4) -- (1,-6);
     \draw [thin, black] (2,-4) -- (2,-5);
     \draw [thin, black] (3,-2) -- (3,-5);
     \draw [thin, black] (4,-2) -- (4,-5);
      \draw [thin, black] (5,-2) -- (5,-3);
     \draw [thin, black] (6,0) -- (6,-3);
     \draw [thin, black] (7,0) -- (7,-3);
     \draw [thin, black] (8,0) -- (8,-1);
     \draw [thin, black] (9,0) -- (9,-1);
     \draw [thin, black] (10,0) -- (10,-1);
     \draw [thin, black] (11,0) -- (11,-1);
      \node[inner sep=2pt] at (0.5,-4.5) {$\ast$};
     \node[inner sep=2pt] at (1.5,-4.5) {$\ast$};
     \node[inner sep=2pt] at (2.5,-4.5) {$\ast$};
     \node[inner sep=2pt] at (3.5,-2.5) {$\ast$};
     \node[inner sep=2pt] at (4.5,-2.5) {$\ast$};
     \node[inner sep=2pt] at (5.5,-2.5) {$\ast$};
    \node[inner sep=2pt] at (6.5,-0.5) {$\ast$};
    \node[inner sep=2pt] at (7.5,-0.5) {$\ast$};
     \node[inner sep=2pt] at (8.5,-0.5) {$\ast$};
     \node[inner sep=2pt] at (9.5,-0.5) {$\ast$};
     \node[inner sep=2pt] at (10.5,-0.5) {$\ast$};
     \end{tikzpicture}
    \caption{A horizontal strip $(\lambda/\mu)\uparrow$ composed of all $\ast$'s locations.}
    \end{figure}
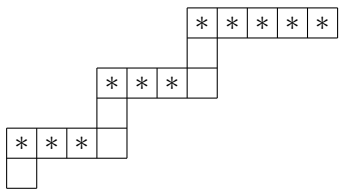

\subsection{$n$-border strip decomposition}
For two partitions $\mu\subset\la$, a chain decomposition of $\la/\mu$
\begin{align}\label{e:chain}
\underline{\alpha}: \mu=\alpha^{(0)}\subset\alpha^{(1)}\subset\cdots\subset\alpha^{(l)}=\lambda
\end{align}
 is called an {\it $n$-border strip chain} of weight $\tau=(\tau_1,\tau_2,\cdots,\tau_l)$ $(\tau_i\geq1)$, if each $\alpha^{(i)}/\alpha^{(i-1)}$ is an $\tau_in$-border strip for $i=1,\ldots,l$. In this case, we say the length of the chain $\underline{\alpha}$ is $l$, denote by $l(\underline{\alpha})$ and also let
\begin{align}
\sgn(\underline{\alpha}):=\prod_{i=1}^{l}(-1)^{ht(\alpha^{(i)}/\alpha^{(i-1)})}.
\end{align}
%where $\sgn(\alpha^{(i)}/\alpha^{(i-1)}):=(-1)^{spin(\alpha^{(i)}/\alpha^{(i-1)})}$.
%where $\alpha^{(i)}/\alpha^{(i-1)}$ is an $\tau_in$-border strip and $sgn(\alpha^{(i)}/\alpha^{(i-1)})=(-1)^{spin(\alpha^{(i)}/\alpha^{(i-1)})}$.

Let $\underline{\alpha}$ be an $n$-border strip chain of weight $\tau=(\tau_1,\tau_2,\cdots,\tau_l)$ $(\tau_i\geq1)$. We introduce two conditions:
\begin{enumerate}[label= {\bf(\roman*)}]
	\item $\tau_i=1$ for all $i=1,\ldots,l$. That is, each $\alpha^{(i)}/\alpha^{(i-1)}$ is an $n$-boder strip.
	\item each starting box $s^i$ of $\alpha^{(i)}/\alpha^{(i-1)}~(i=1,\ldots,l)$ lies in $(\lambda/\mu)\uparrow$ and the starting box of $\alpha^{(i)}/\alpha^{(i-1)}$ lies in the southwest of that of $\alpha^{(j)}/\alpha^{(j-1)}$ provided that $i<j$.
\end{enumerate}
%, then $\lambda/\mu$ is simply called a $n$-border strip of \textcolor{blue}{\bf weight $l$;}\\

 %then $\lambda/\mu$ is called a \textcolor{blue}{\bf horizontal} $n$-border strip chain of weight $\tau=(\tau_1,\tau_2,\cdots,\tau_l)$.\\

The following notions are essential in the subsequent context:
\begin{itemize}
%\item If $\underline{\alpha}$ satisfies {\bf (i)}, then $\underline{\alpha}$ is called an $n$-border strip chain of weight $l$;

\item If $\underline{\alpha}$ satisfies {\bf (ii)}, then $\underline{\alpha}$ is called a horizontal $n$-border strip chain of weight $\tau$ (see Fig. 4);

\item If $\underline{\alpha}$ satisfies both {\bf (i)} and {\bf (ii)}, then $\underline{\alpha}$ is called a horizontal $n$-border strip chain of weight $l$.
\end{itemize}
For an $n$-border strip chain of weight $\tau$, we usually omit the weight $\tau$ for simplification unless otherwise specified. Fig. 3 gives an example for a horizontal $4$-border strip chain of weight $5$.
 \begin{figure}[H]
\centerline{\begin{tikzpicture}[scale=0.5]
    \coordinate (Origin)   at (0,0);
    \coordinate (XAxisMin) at (0,0);
    \coordinate (XAxisMax) at (9,0);
    \coordinate (YAxisMin) at (0,0);
    \coordinate (YAxisMax) at (0,-9);
    \draw [thin, black] (4,0) -- (8,0);
    \draw [thin, black] (4,-1) --(8,-1);
    \draw [thin, black] (3,-2) -- (8,-2);
    \draw [thin, black] (2,-3) -- (8,-3);
    \draw [thin, black] (0,-4) -- (4,-4);
    \draw [thin, black] (0,-5) -- (3,-5);
    \draw [thin, black] (0,-6) -- (1,-6);
    \draw [thin, black] (0,-7) -- (1,-7);
     \draw [thin, black] (1,-4) -- (1,-7);
     \draw [thin, black] (2,-3) -- (2,-5);
     \draw [thin, black] (3,-2) -- (3,-5);
     \draw [thin, black] (4,0) -- (4,-4);
      \draw [thin, black] (5,0) -- (5,-3);
     \draw [thin, black] (6,0) -- (6,-3);
     \draw [thin, black] (7,0) -- (7,-3);
     \draw [thin, black] (8,0) -- (8,-3);
      \draw [thin, black] (0,-4) -- (0,-7);
      \draw [thin, black] (7.5,-0.5) -- (7.5,-2.5);
      \draw [thin, black] (6.5,-2.5) -- (7.5,-2.5);
      \draw [thin, black] (6.5,-0.5) -- (6.5,-1.5);
       \draw [thin, black] (5.5,-1.5) -- (6.5,-1.5);
       \draw [thin, black] (5.5,-1.5) -- (5.5,-2.5);
       \draw [thin, black] (4.5,-0.5) -- (5.5,-0.5);
       \draw [thin, black] (4.5,-0.5) -- (4.5,-2.5);
       \draw [thin, black] (3.5,-2.5) -- (3.5,-3.5);
       \draw [thin, black] (2.5,-3.5) -- (3.5,-3.5);
       \draw [thin, black] (2.5,-3.5) -- (2.5,-4.5);
       \draw [thin, black] (0.5,-4.5) -- (1.5,-4.5);
       \draw [thin, black] (0.5,-4.5) -- (0.5,-6.5);
       \node[inner sep=2pt] at (0.5,-6.5) {$\bullet$};
       \node[inner sep=2pt] at (0.5,-7.5) {$e^{1}$};
     \node[inner sep=2pt] at (1.5,-4.5) {\textcolor{blue}{$\bullet$}};
       \node[inner sep=2pt] at (1.5,-3.5) {$s^{1}$};
     \node[inner sep=2pt] at (2.5,-4.5) {$\bullet$};
      \node[inner sep=2pt] at (2.5,-5.5) {$e^{2}$};
     \node[inner sep=2pt] at (3.5,-2.5)  {\textcolor{blue}{$\bullet$}};
     \node[inner sep=2pt] at (3.5,-1.5) {$s^{2}$};
     \node[inner sep=2pt] at (4.5,-2.5) {$\bullet$};
      \node[inner sep=2pt] at (4.5,-3.5) {$e^{3}$};
     \node[inner sep=2pt] at (5.5,-0.5)  {\textcolor{blue}{$\bullet$}};
     \node[inner sep=2pt] at (5.5,0.5) {$s^{3}$};
     \node[inner sep=2pt] at (5.5,-2.5) {$\bullet$};
     \node[inner sep=2pt] at (5.5,-3.5) {$e^{4}$};
     \node[inner sep=2pt] at (6.5,-0.5)  {\textcolor{blue}{$\bullet$}};
      \node[inner sep=2pt] at (6.5,0.5) {$s^{4}$};
     \node[inner sep=2pt] at (6.5,-2.5) {$\bullet$};
     \node[inner sep=2pt] at (6.5,-3.5) {$e^{5}$};
      \node[inner sep=2pt] at (7.5,-0.5)  {\textcolor{blue}{$\bullet$}};
       \node[inner sep=2pt] at (7.5,0.5) {$s^{5}$};
     \end{tikzpicture}}
    \caption{A horizontal $4$-border strip chain of weight $5$.}
    \end{figure}
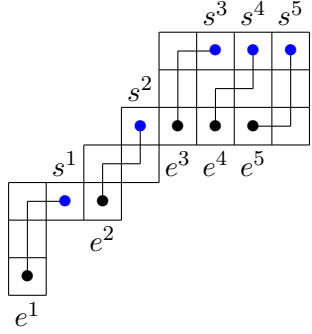
    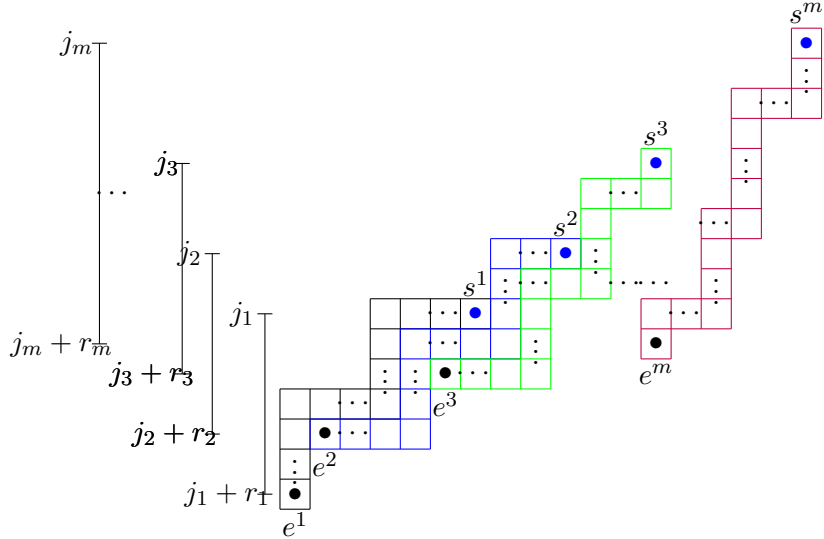
\begin{figure}[H]
\begin{tikzpicture}[scale=0.4]\label{f:H}
    \coordinate (Origin)   at (0,0);
    \coordinate (XAxisMin) at (-10,0);
    \coordinate (XAxisMax) at (20,0);
    \coordinate (YAxisMin) at (0,9);
    \coordinate (YAxisMax) at (0,-9);
    \draw [thin, black] (-0.75,-2.5) -- (-0.25,-2.5);
    \draw [thin, black] (-0.5,-2.5) -- (-0.5,-8.5);
     \draw [thin, black] (-0.75,-8.5) -- (-0.25,-8.5);
     \draw [thin, black] (-2.5,-0.5) -- (-2,-0.5);
     \draw [thin, black] (-2.25,-0.5) -- (-2.25,-6.5);
      \draw [thin, black] (-2.5,-6.5) -- (-2,-6.5);
       \draw [thin, black] (-3.5,2.5) -- (-3,2.5);
     \draw [thin, black] (-3.25,2.5) -- (-3.25,-4.5);
      \draw [thin, black] (-3.5,-4.5) -- (-3,-4.5);

       \draw [thin, black] (-6.25,6.5) -- (-5.75,6.5);
     \draw [thin, black] (-6,6.5) -- (-6,-3.5);
      \draw [thin, black] (-6.25,-3.5) -- (-5.75,-3.5);

   \draw [thin, black] (3,-2) -- (7,-2);
    \draw [thin, black] (3,-3) -- (7,-3);
    \draw [thin, black] (3,-4) -- (4,-4);
    \draw [thin, black] (0,-5) -- (4,-5);
    \draw [thin, black] (0,-6) -- (4,-6);
    \draw [thin, black] (0,-7) -- (1,-7);
     \draw [thin, black] (0,-8) -- (1,-8);
     \draw [thin, black] (0,-9) -- (1,-9);
     \draw [thin, black] (0,-5) -- (0,-9);
     \draw [thin, black] (1,-5) -- (1,-9);
      \draw [thin, black] (2,-5) -- (2,-6);
     \draw [thin, black] (3,-2) -- (3,-6);
     \draw [thin, black] (4,-2) -- (4,-6);
     \draw [thin, black] (5,-2) -- (5,-3);
      \draw [thin, black] (6,-2) -- (6,-3);
      \draw [thin, black] (7,-2) -- (7,-3);
      \draw [thin, blue](7,0) -- (10,0);
       \draw [thin, blue](7,-1) -- (10,-1);
       \draw [thin, blue](7,-2) -- (8,-2);
       \draw [thin, blue](4,-3) -- (8,-3);
       \draw [thin, blue](4,-4) -- (8,-4);
       \draw [thin, blue](4,-5) -- (5,-5);
       \draw [thin, blue](1,-6) -- (5,-6);
       \draw [thin, blue](1,-7) -- (5,-7);
       \draw [thin, blue](1,-6) -- (1,-7);
       \draw [thin, blue](2,-6) -- (2,-7);
       \draw [thin, blue](3,-6) -- (3,-7);
       \draw [thin, blue](4,-3) -- (4,-7);
       \draw [thin, blue](5,-3) -- (5,-7);
       \draw [thin, blue](6,-3) -- (6,-4);
       \draw [thin, blue](7,0) -- (7,-4);
       \draw [thin, blue](8,0) -- (8,-4);
       \draw [thin, blue](9,0) -- (9,-1);
       \draw [thin, blue](10,0) -- (10,-1);
       \draw [thin, green](5,-4) -- (9,-4);
        \draw [thin, green](5,-5) -- (9,-5);
       \draw [thin, green](8,-3) -- (9,-3);
       \draw [thin, green](8,-2) -- (11,-2);
       \draw [thin, green](8,-1) -- (11,-1);
       \draw [thin, green](10,0) -- (11,0);
        \draw [thin, green](10,1) -- (13,1);
        \draw [thin, green](10,2) -- (13,2);
         \draw [thin, green](12,3) -- (13,3);
        \draw [thin, green](13,1) -- (13,3);
        \draw [thin, green](12,3) -- (12,1);
        \draw [thin, green](11,2) -- (11,-2);
         \draw [thin, green](10,2) -- (10,-2);
          \draw [thin, green](9,-1) -- (9,-5);
         \draw [thin, green](8,-1) -- (8,-5);
         \draw [thin, green](7,-4) -- (7,-5);
         \draw [thin, green](6,-4) -- (6,-5);
         \draw [thin, green](5,-4) -- (5,-5);
          \draw [thin,purple](12,-4) -- (13,-4);
          \draw [thin,purple](12,-3) -- (15,-3);
         \draw [thin,purple](12,-2) -- (15,-2);
         \draw [thin,purple](14,-1) -- (15,-1);
         \draw [thin,purple](14,0) -- (16,0);
          \draw [thin,purple](14,1) -- (16,1);
          \draw [thin,purple](15,2) -- (16,2);
           \draw [thin,purple](15,3) -- (16,3);
           \draw [thin,purple](15,4) -- (18,4);
         \draw [thin,purple](15,5) -- (18,5);
         \draw [thin,purple](17,6) -- (18,6);
          \draw [thin,purple](17,7) -- (18,7);
          \draw [thin,purple](18,7) -- (18,4);
          \draw [thin,purple](17,7) -- (17,4);
          \draw [thin,purple](16,5) -- (16,0);
          \draw [thin,purple](15,5) -- (15,-3);
          \draw [thin,purple](14,1) -- (14,-3);
           \draw [thin,purple](13,-2) -- (13,-4);
           \draw [thin,purple](12,-2) -- (12,-4);
          \node[inner sep=2pt] at (6.5,-2.5) {\textcolor{blue}{\bf$\bullet$}};
          \node[inner sep=2pt] at (6.5,-1.5) {$s^{1}$};
         \node[inner sep=2pt] at (0.5,-8.5) {$\bullet$};
         \node[inner sep=2pt] at (0.5,-9.5) {$e^{1}$};
         \node[inner sep=2pt] at (9.5,-0.5){\textcolor{blue}{\bf$\bullet$}};
         \node[inner sep=2pt] at (9.5,0.5) {$s^{2}$};
         \node[inner sep=2pt] at (1.5,-6.5) {$\bullet$};
         \node[inner sep=2pt] at (1.5,-7.5) {$e^{2}$};
         \node[inner sep=2pt] at (12.5,2.5) {\textcolor{blue}{\bf$\bullet$}};
         \node[inner sep=2pt] at (12.5,3.5) {$s^{3}$};
         \node[inner sep=2pt] at (5.5,-4.5) {$\bullet$};
         \node[inner sep=2pt] at (5.5,-5.5) {$e^{3}$};
         \node[inner sep=2pt] at (17.5,6.5) {\textcolor{blue}{\bf$\bullet$}};
         \node[inner sep=2pt] at (17.5,7.5) {$s^{m}$};
         \node[inner sep=2pt] at (12.5,-3.5) {$\bullet$};
          \node[inner sep=2pt] at (12.5,-4.5) {$e^{m}$};
        \node[inner sep=2pt] at (8.5,-0.5) {$\cdots$};
   \node[inner sep=2pt] at (7.5,-1.5) {$\vdots$};
    \node[inner sep=2pt] at (5.5,-2.5) {$\cdots$};
     \node[inner sep=2pt] at (5.5,-3.5) {$\cdots$};
     \node[inner sep=2pt] at (3.5,-4.5) {$\vdots$};
      \node[inner sep=2pt] at (4.5,-4.5) {$\vdots$};
       \node[inner sep=2pt] at (2.5,-5.5) {$\cdots$};
        \node[inner sep=2pt] at (2.5,-6.5) {$\cdots$};
         \node[inner sep=2pt] at (0.5,-7.5) {$\vdots$};
         \node[inner sep=2pt] at (11.5,1.5) {$\cdots$};
         \node[inner sep=2pt] at (10.5,-0.5) {$\vdots$};
         \node[inner sep=2pt] at (8.5,-1.5) {$\cdots$};
         \node[inner sep=2pt] at (8.5,-3.5) {$\vdots$};
         \node[inner sep=2pt] at (6.5,-4.5) {$\cdots$};
         \node[inner sep=2pt] at (11.5,-1.5) {$\cdots$};
          \node[inner sep=2pt] at (12.5,-1.5) {$\cdots$};
         \node[inner sep=2pt] at (17.5,5.5) {$\vdots$};
          \node[inner sep=2pt] at (16.5,4.5) {$\cdots$};
         \node[inner sep=2pt] at (15.5,2.5) {$\vdots$};
          \node[inner sep=2pt] at (14.5,0.5) {$\cdots$};
          \node[inner sep=2pt] at (14.5,-1.5) {$\vdots$};
          \node[inner sep=2pt] at (13.5,-2.5) {$\cdots$};
         \node[inner sep=2pt] at (-3,-0.5) {$j_{2}$};
          \node[inner sep=2pt] at (-1.25,-2.5) {$j_{1}$};
          \node[inner sep=2pt] at (-1.75,-8.5) {$j_{1}+r_{1}$};
          \node[inner sep=2pt] at (-3.5,-6.5) {$j_{2}+r_{2}$};
           \node[inner sep=2pt] at (-3.75,2.5) {$j_{3}$};
          \node[inner sep=2pt] at (-4.25,-4.5) {$j_{3}+r_{3}$};
          \node[inner sep=2pt] at (-3.5,-6.5) {$j_{2}+r_{2}$};
           \node[inner sep=2pt] at (-3.75,2.5) {$j_{3}$};
          \node[inner sep=2pt] at (-4.25,-4.5) {$j_{3}+r_{3}$};
           \node[inner sep=2pt] at (-5.5,1.5) {$\cdots$};
           \node[inner sep=2pt] at (-6.75,6.5) {$j_{m}$};
          \node[inner sep=2pt] at (-7.25,-3.5) {$j_{m}+r_{m}$};
     \end{tikzpicture}
    \caption{A horizontal $n$-border strip chain in $\lambda/\mu$.}
    \end{figure}
%In particular, we will call $\lambda/\mu$ a horizontal $n$-border strip of weight $l$ if both $(\rmnum{1})$ and $(\rmnum{2})$ hold (see Figure 3).
\begin{rem}\label{R:horizontal}
For given partitions $\mu\subset\la$, a horizontal $n$-border strip chain of weight $l$ is unique if it exists. This can be proven by induction on $l$. Therefore, in this case, we also say $\la/\mu$ is a horizontal $n$-border strip of weight $l$ and define
\begin{align}\label{e:sgn}
\sgn(\la/\mu):=\sgn(\underline{\alpha})=\prod_{i=1}^{l}(-1)^{ht(\alpha^{(i)}/\alpha^{(i-1)})}.
\end{align}
 %\begin{enumerate}
% \item
It is worth pointing that a horizontal $n$-border strip chain is not usually unique for a given skew diagram.
%However, if we suppose
%\begin{align*}
%\mu=\alpha^{(0)}\subset \alpha^{(1)}\subset\cdots \subset\alpha^{(k-1)}\subset\alpha^{(k)}=\lambda
%\end{align*}
%is a horizontal $n$-border strip chain of $\la/\mu$, then the condition {\bf (ii)} guarantees that the starting box of $\alpha^{(i)}/\alpha^{(i-1)}$ is southwest of that of $\alpha^{(j)}/\alpha^{(j-1)}$ provided that $i<j$ (see Fig. 4). Note that in this case we can not usually make sure the ending box of $\alpha^{(i)}/\alpha^{(i-1)}$ is also southwest of that of $\alpha^{(j)}/\alpha^{(j-1)}$ if $i<j$ due to the different sizes of these two border strips.
% \end{enumerate}

\end{rem}

In a skew diagram $\E$, we number the rows (resp. columns) of $\E$ from top to bottom (resp. from left to right). A box has {\em coordinate} $(i,j)$ if it occupies the $i$th row and the $j$th column and we say that the box $(i,j)$ has {\em content $j-i$}. Clearly, two boxes have the same content if and only if they are on the same diagonal.
Let $\D(\E)$ be the set of all $n$-border strip decompositions of $\E$.
%pairs $(\theta,\varphi)$ such that $\theta$ and $\varphi$ are both border strips and the disjoint union $\theta\dot\cup \varphi$ is a connected skew diagram.

Now we introduce a map $\omega:\D(\E)\rightarrow \D(\E)$ by the following procedure.
 %we can convert a $n$-border strip chain of $\E$ into a horizontal one. Define $\omega$ as follows:
 \begin{enumerate}[label=Step \arabic*:]
    \item For a given decomposition $\underline{\alpha}$ of $\E$, find the border strip (denoted by $\Theta$) whose starting box is the northeasternmost box of $\E$;

    \item If there is no border strip below $\Theta$, then we view $\underline{\alpha}\setminus\Theta$ and $\E\setminus\Theta$ as $\underline{\alpha}$ and $\E$ respectively. We finish this procedure if $\underline{\alpha}$ (or $\E$) is $\varnothing$ and return to Step 1 if else;

    \item If there is a border strip (denoted by $\Phi$) below $\Theta$, then
     \begin{itemize}
\item If the starting box of $\Phi$ is directly below the ending box of $\Theta$ (see Fig. 5), then we view $\Theta\dot\cup\Phi$ as $\Theta$ and return to Step 2;
      %there does not exist any two boxes with same content in $\theta\dot\cup \varphi$, $\omega(\theta,\varphi)=(\theta,\varphi)$ (see Fig. 4);

\item If the starting box of $\Phi$ is not directly below the ending box of $\Theta$, then exchange the boxes with the same content in $\Theta\dot\cup\Phi$ (see Fig. 6). We view the resulting border strip $\Theta^{'}$ as $\Theta$ and return to Step 2.
     %are two boxes with same content in $\theta\dot\cup \varphi$, then $\omega$ exchanges these two boxes (see Fig. 5).
\end{itemize}
\end{enumerate}
%\begin{itemize}
%\item If there does not exist any two boxes with same content in $\theta\dot\cup \varphi$, $\omega(\theta,\varphi)=(\theta,\varphi)$ (see Fig. 4);
%
%\item If there are two boxes with same content in $\theta\dot\cup \varphi$, then $\omega$ exchanges these two boxes (see Fig. 5).
%\end{itemize}

This procedure clearly terminates after finitely many steps. % for finitely many border strips in $\E$.
It is clear that the resulting $n$-border strip decomposition is horizontal. That is, $\omega$ converts an $n$-border strip chain of $\E$ into a horizontal one.

%Since there are at most two boxes with the same content, $\omega$ is well-defined. Clearly, $\omega$ is an involution, i.e., $\omega=\omega^{-1}$. Moreover, $\omega$ preserves the weight of each border strip, i.e., $|\omega(\theta)|=|\theta|$ and $|\omega(\varphi)|=|\varphi|$.
\begin{figure}[H]
\centerline{\begin{tikzpicture}[scale=0.4]
    \coordinate (Origin)   at (0,0);
    \coordinate (XAxisMin) at (-3,0);
    \coordinate (XAxisMax) at (11,0);
    \coordinate (YAxisMin) at (0,0);
    \coordinate (YAxisMax) at (0,-9);
  \draw [thin, black] (6,0) -- (9,0);
  \draw [thin, black] (6,-1) -- (9,-1);
    \draw [thin, black] (4,-2) --(7,-2);
    \draw [thin, black] (4,-3) -- (7,-3);
    \draw [thin, black] (4,-4) -- (5,-4);
    \draw [thin, black] (4,-5) -- (5,-5);
    \draw [thin, black] (4,-2) -- (4,-5);
    \draw [thin, black] (5,-2) -- (5,-5);
     \draw [thin, black] (6,0) -- (6,-3);
     \draw [thin, black] (7,0) -- (7,-3);
      \draw [thin, black] (8,0) -- (8,-1);
      \draw [thin, black] (9,0) -- (9,-1);
    \draw [thin, blue](2,-5) -- (5,-5);
       \draw [thin, blue](2,-6) -- (5,-6);
       \draw [thin, blue](0,-7) -- (3,-7);
       \draw [thin, blue](0,-8) -- (3,-8);
       \draw [thin, blue](0,-7) -- (0,-8);
       \draw [thin, blue](1,-7) -- (1,-8);
       \draw [thin, blue](2,-5) -- (2,-8);
       \draw [thin, blue](3,-5) -- (3,-8);
       \draw [thin, blue](4,-5) -- (4,-6);
       \draw [thin, blue](5,-5) -- (5,-6);
      \node[inner sep=2pt] at (8.5,-0.5) {\textcolor{blue}{\bf$\bullet$}};
      \node[inner sep=2pt] at (9.5,-0.5) {$s^{1}$};
      \node[inner sep=2pt] at (10.5,-0.5) {$\Theta$};
         \node[inner sep=2pt] at (4.5,-4.5) {$\bullet$};
          \node[inner sep=2pt] at (5.5,-4.5) {$e^{1}$};
         \node[inner sep=2pt] at (4.5,-5.5) {\textcolor{blue}{\bf$\bullet$}};
          \node[inner sep=2pt] at (5.5,-5.5) {$s^{2}$};
         \node[inner sep=2pt] at (0.5,-7.5) {$\bullet$};
         \node[inner sep=2pt] at (-0.5,-7.5) {$e^{2}$};
         \node[inner sep=2pt] at (-1.5,-7.5) {$\Phi$};
        \node[inner sep=2pt] at (7.5,-0.5) {$\cdots$};
   \node[inner sep=2pt] at (6.5,-1.5) {$\vdots$};
   \node[inner sep=2pt] at (5.5,-2.5) {$\cdots$};
     \node[inner sep=2pt] at (4.5,-3.5) {$\vdots$};
    \node[inner sep=2pt] at (3.5,-5.5) {$\cdots$};
         \node[inner sep=2pt] at (2.5,-6.5) {$\vdots$};
         \node[inner sep=2pt] at (1.5,-7.5) {$\cdots$};
     \end{tikzpicture}}
    \caption{ $s^2$ is directly below $e^1$.}
    \end{figure}
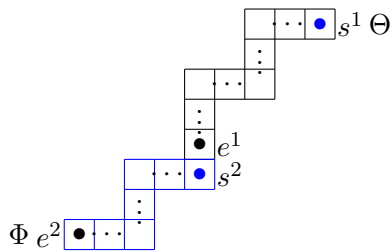
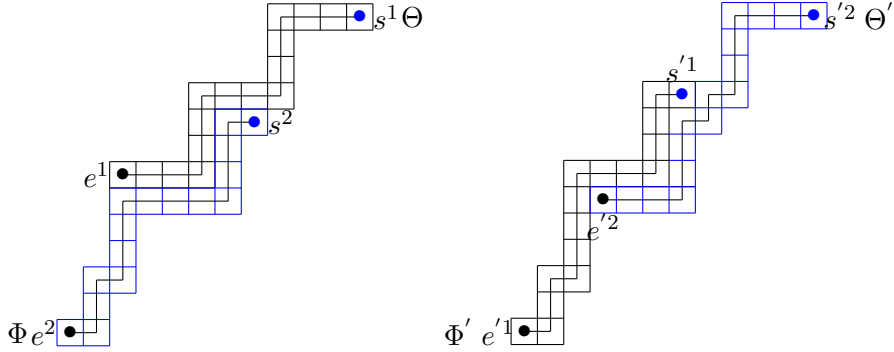
\begin{figure}[H]
\begin{tikzpicture}[scale=0.35]
    \coordinate (Origin)   at (0,0);
    \coordinate (XAxisMin) at (-3,0);
    \coordinate (XAxisMax) at (13,0);
    \coordinate (YAxisMin) at (0,0);
    \coordinate (YAxisMax) at (0,-14);
   \draw [thin, black] (8,0) -- (12,0);
    \draw [thin, black] (8,-1) -- (12,-1);
    \draw [thin, black] (8,-2) -- (9,-2);
    \draw [thin, black] (5,-3) -- (9,-3);
    \draw [thin, black] (5,-4) -- (9,-4);
    \draw [thin, black] (5,-5) -- (6,-5);
     \draw [thin, black] (2,-6) -- (6,-6);
     \draw [thin, black] (2,-7) -- (6,-7);
     \draw [thin, black] (2,-6) -- (2,-7);
     \draw [thin, black] (3,-6) -- (3,-7);
      \draw [thin, black] (4,-6) -- (4,-7);
     \draw [thin, black] (5,-3) -- (5,-7);
     \draw [thin, black] (6,-3) -- (6,-7);
     \draw [thin, black] (7,-3) -- (7,-4);
      \draw [thin, black] (8,0) -- (8,-4);
      \draw [thin, black] (9,0) -- (9,-4);
      \draw [thin, black] (10,0) -- (10,-1);
      \draw [thin, black] (11,0) -- (11,-1);
      \draw [thin, black] (12,0) -- (12,-1);
      \draw [thin, blue](6,-4) -- (8,-4);
       \draw [thin, blue](6,-5) -- (8,-5);
       \draw [thin, blue](6,-6) -- (7,-6);
       \draw [thin, blue](2,-7) -- (7,-7);
       \draw [thin, blue](2,-8) -- (7,-8);
       \draw [thin, blue](2,-9) -- (3,-9);
       \draw [thin, blue](1,-10) -- (3,-10);
       \draw [thin, blue](1,-11) -- (3,-11);
       \draw [thin, blue](0,-12) -- (2,-12);
       \draw [thin, blue](0,-13) -- (2,-13);
       \draw [thin, blue](0,-12) -- (0,-13);
       \draw [thin, blue](1,-10) -- (1,-13);
       \draw [thin, blue](2,-7) -- (2,-13);
       \draw [thin, blue](3,-7) -- (3,-11);
       \draw [thin, blue](4,-7) -- (4,-8);
       \draw [thin, blue](5,-7) -- (5,-8);
       \draw [thin, blue](6,-4) -- (6,-8);
       \draw [thin, blue](7,-4) -- (7,-8);
        \draw [thin, blue](8,-4) -- (8,-5);
        \draw [thin, black] (8.5,-0.5) -- (11.5,-0.5);
       \draw [thin, black] (8.5,-0.5) -- (8.5,-3.5);
       \draw [thin, black] (5.5,-3.5) -- (8.5,-3.5);
       \draw [thin, black] (5.5,-3.5) -- (5.5,-6.5);
       \draw [thin, black] (2.5,-6.5) -- (5.5,-6.5);
       \draw [thin, black] (6.5,-4.5) -- (7.5,-4.5);
       \draw [thin, black] (6.5,-4.5) -- (6.5,-7.5);
       \draw [thin, black] (2.5,-7.5) -- (6.5,-7.5);
       \draw [thin, black] (2.5,-7.5) -- (2.5,-10.5);
       \draw [thin, black] (1.5,-10.5) -- (1.5,-12.5);
       \draw [thin, black] (1.5,-12.5) -- (0.5,-12.5);
       \draw [thin, black] (1.5,-10.5) -- (2.5,-10.5);
        \node[inner sep=2pt] at (11.5,-0.5) {\textcolor{blue}{\bf$\bullet$}};
        \node[inner sep=2pt] at (12.5,-0.5) {$s^{1}$};
        \node[inner sep=2pt] at (13.5,-0.5) {$\Theta$};
         \node[inner sep=2pt] at (2.5,-6.5) {$\bullet$};
          \node[inner sep=2pt] at (1.5,-6.5) {$e^{1}$};
         \node[inner sep=2pt] at (7.5,-4.5) {\textcolor{blue}{\bf$\bullet$}};
         \node[inner sep=2pt] at (8.5,-4.5) {$ s^{2}$};
         \node[inner sep=2pt] at (0.5,-12.5) {$\bullet$};
        \node[inner sep=2pt] at (-0.5,-12.5) {$e^{2}$};
        \node[inner sep=2pt] at (-1.5,-12.5) {$\Phi$};
         %\node[inner sep=2pt] at (4,-13.5) {$(a)$};
          %\node[inner sep=2pt] at (6.5,-3.5) {\textcolor{blue}{\bf$b$}};
%          \node[inner sep=2pt] at (7.5,-3.5){\textcolor{blue}{\bf$a$}};
     \end{tikzpicture}
     ~~
   \begin{tikzpicture}[scale=0.35]
     \coordinate (Origin)   at (0,0);
    \coordinate (XAxisMin) at (-3,0);
    \coordinate (XAxisMax) at (13,0);
    \coordinate (YAxisMin) at (0,0);
    \coordinate (YAxisMax) at (0,-14);
   \draw [thin, black] (5,-3) -- (7,-3);
    \draw [thin, black] (5,-4) -- (7,-4);
    \draw [thin, black] (5,-5) -- (6,-5);
     \draw [thin, black] (2,-6) -- (6,-6);
     \draw [thin, black] (2,-7) -- (7,-7);
     \draw [thin, black] (2,-8) -- (3,-8);
     \draw [thin, black] (2,-9) -- (3,-9);
      \draw [thin, black] (1,-10) -- (3,-10);
     \draw [thin, black] (1,-11) -- (3,-11);
     \draw [thin, black] (0,-12) -- (2,-12);
     \draw [thin, black] (0,-13) -- (2,-13);
      \draw [thin, black] (0,-12) -- (0,-13);
      \draw [thin, black] (1,-10) -- (1,-13);
      \draw [thin, black] (2,-6) -- (2,-13);
      \draw [thin, black] (3,-6) -- (3,-11);
      \draw [thin, black] (4,-6) -- (4,-7);
        \draw [thin, black] (5,-3) -- (5,-7);
        \draw [thin, black] (6,-3) -- (6,-7);
        \draw [thin, black] (7,-3) -- (7,-4);
      \draw [thin, blue](8,0) -- (12,0);
       \draw [thin, blue](8,-1) -- (12,-1);
       \draw [thin, blue](8,-2) -- (9,-2);
       \draw [thin, blue](7,-3) -- (9,-3);
       \draw [thin, blue](7,-4) -- (9,-4);
       \draw [thin, blue](6,-5) -- (8,-5);
       \draw [thin, blue](6,-6) -- (7,-6);
       \draw [thin, blue](3,-7) -- (7,-7);
      \draw [thin, blue](3,-8) -- (7,-8);
       \draw [thin, blue](3,-7) -- (3,-8);
       \draw [thin, blue](4,-7) -- (4,-8);
       \draw [thin, blue](5,-7) -- (5,-8);
       \draw [thin, blue](6,-7) -- (6,-8);
        \draw [thin, blue](7,-3) -- (7,-8);
        \draw [thin, blue](8,0) -- (8,-5);
       \draw [thin, blue](9,0) -- (9,-4);
       \draw [thin, blue](10,0) -- (10,-1);
       \draw [thin, blue](11,0) -- (11,-1);
       \draw [thin, blue](12,0) -- (12,-1);
        \draw [thin, black] (8.5,-0.5) -- (11.5,-0.5);
       \draw [thin, black] (8.5,-0.5) -- (8.5,-3.5);
       \draw [thin, black] (7.5,-3.5) -- (8.5,-3.5);
       \draw [thin, black] (7.5,-3.5) -- (7.5,-4.5);
       \draw [thin, black] (7.5,-4.5) -- (6.5,-4.5);
       \draw [thin, black] (6.5,-4.5) -- (6.5,-7.5);
       \draw [thin, black] (6.5,-7.5) -- (3.5,-7.5);
       \draw [thin, black] (6.5,-3.5) -- (5.5,-3.5);
       \draw [thin, black] (5.5,-3.5) -- (5.5,-6.5);
       \draw [thin, black] (5.5,-6.5) -- (2.5,-6.5);
       \draw [thin, black] (2.5,-6.5) -- (2.5,-10.5);
       \draw [thin, black] (1.5,-10.5) -- (2.5,-10.5);
       \draw [thin, black] (1.5,-10.5) -- (1.5,-12.5);
        \draw [thin, black] (0.5,-12.5) -- (1.5,-12.5);
       \node[inner sep=2pt] at (6.5,-3.5) {\textcolor{blue}{\bf$\bullet$}};
        \node[inner sep=2pt] at (6.5,-2.5) {$s^{'1}$};
         \node[inner sep=2pt] at (0.5,-12.5) {$\bullet$};
          \node[inner sep=2pt] at (-0.5,-12.5) {$e^{'1}$};
         \node[inner sep=2pt] at (11.5,-0.5) {\textcolor{blue}{\bf$\bullet$}};
         \node[inner sep=2pt] at (12.5,-0.5) {$ s^{'2}$};
         \node[inner sep=2pt] at (14,-0.5) {$\Theta^{'}$};
         \node[inner sep=2pt] at (3.5,-7.5) {$\bullet$};
        \node[inner sep=2pt] at (3.5,-8.5) {$e^{'2}$};
        \node[inner sep=2pt] at (-2,-12.5) {$\Phi^{'}$};
        % \node[inner sep=2pt] at (4,-13.5) {$(b)$};
     \end{tikzpicture}
    \caption{Exchange boxes with the same content.}
    %(a) An $n$-border strip chain of weight $\tau$, and $\mu=\alpha^{(0)}\subset \alpha^{(1)}\subset \alpha^{(2)}=\lambda$\\  (b) A horizontal $n$-border strip chain of weight $\rho$, and $\mu=\beta^{(0)}\subset \beta^{(1)}\subset \beta^{(2)}=\lambda$. }
  \end{figure}
%\begin{rem}\label{t:ch2-rem-1} Let $\lambda/\mu$ be an $n$-border strip chain of weight $\tau=(\tau_1,\tau_2,\cdots,\tau_l)$ $(|\tau|=k\geq2)$. Then $\lambda/\mu$ must be decomposed into several $n$-border strip chains of weight $\rho$ $(|\rho|=|\tau|)$. For example, if $\lambda/\mu$ is an $n$-border strip of weight $\tau=(\tau_1,\tau_2,\cdots,\tau_l)$ $(|\tau|=k\geq1)$, then it can definitely be decomposed into an $n$-border strip of weight $k$. In particular, an $kn$-border strip $(\tau_{1}=k)$ can be decomposed into an $n$-border strip of weight $k$.
%\end{rem}
%
%\textcolor{red}{\bf Next, we will prove that if $\lambda/\mu$ $(|\lambda/\mu|=kn, k\geq 2)$ is an $n$-border strip chain of weight $\tau$, then it satisfies a stronger conclusion than Remark \ref{t:ch2-rem-1}. That is, it can not only be decomposed into an $n$-border strip chain of weight $\rho$ $(|\rho|=|\tau|)$, but also this chain is horizontal.}

Denote by $\H(\E)$ the set of all horizontal $n$-border strip chains of $\E$. Namely, $\H(\E):=\{\underline{\alpha}\in \D(\E)\mid \underline{\alpha}~ \text{is horizontal}\}$. Therefore $\im(\omega)\subset\H(\E)$. We also define $m(\E):=\max\{l(\underline{\alpha})\mid \underline{\alpha}\in \H(\E)\}$, then we have the following characterization of horizontal $n$-border strips.

\begin{prop}\label{p:characterization}
If $\D(\E)\neq\varnothing$ and $|\E|=kn$, then $\E$ is a horizontal $n$-border strip of weight $k$ if and only if $m(\E)=k$.
\end{prop}
\begin{proof} This is shown by definition once noting that $\H(\E)\neq\varnothing$ when $\D(\E)\neq\varnothing$.
\end{proof}

\begin{rem}\label{R:chacter}
If $\D(\la/\mu)\neq\varnothing$, the above characterization tells us we can find a longest horizontal $n$-border strip chain $\underline{\beta}$ of weight $\rho=(\rho_1,\rho_2,\cdots)$ such that
 \begin{enumerate}
\item if $\la/\mu$ is a horizontal $n$-border strip of weight $k$, then $\rho=(1^k)$ and the longest horizontal $n$-border strip chain $\underline{\beta}$ is unique. Moreover, if we denote $\underline{\beta}$ by
    \begin{align*}
    \underline{\beta}: \mu=\beta^{(0)}\subset\beta^{(1)}\subset\cdots\subset\beta^{(k)}=\lambda,
    \end{align*}
then $\beta^{(i)}/\beta^{(i-1)}$ ($1\leq i\leq k$) is an $n$-border strip and the ending box of $\beta^{(i)}/\beta^{(i-1)}$ is in the southwest of that of $\beta^{(j)}/\beta^{(j-1)}$ if and only if $i<j$.
\item if not, then there exists at least one $\rho_i>1$ such that $\beta^{(i)}/\beta^{(i-1)}$ is a disjoint union of $\rho_i$ $n$-border strips and the starting box of the lower border strip is directly below the ending box of the upper border strip (cf. Fig. 5).

\end{enumerate}
\end{rem}

\subsection{Symmetric functions and plethysm}
We recall some notations and definitions about symmetric functions following \cite[Ch.I]{Mac}. Let $\Lambda_{\mathbb{Q}}$ be the ring of symmetric functions in infinitely many variables $x_{1},x_{2},\cdots $ over $\mathbb{Q}$. There are several well-known bases in $\Lambda_{\mathbb{Q}}$ parameterized by partitions. For arbitrary partition $\lambda=(\lambda_{1},\lambda_{2},\cdots \lambda_{t})$, the power sum symmetric functions  $p_{\lambda}=p_{\lambda_{1}}p_{\lambda_{2}}\cdots p_{\lambda_{t}}$,
where $p_{n}=\sum x_{i}^{n}$, form a $\mathbb{Q}$-basis of $\Lambda_{\mathbb{Q}}$. The canonical inner product$\langle\cdot,\cdot\rangle$ is defined by \cite[p.64, (4.7)]{Mac}
%with the following relations
\begin{align}\label{e:p}
\langle p_{\lambda},p_{\mu}\rangle=\delta_{\lambda\mu}z_{\lambda}.
\end{align}
where $z_{\lambda}$ is defined in \eqref{e:2-1}. Let $s_{\lambda}$ be the Schur function associated with the partition $\lambda$, then $\{s_{\lambda}|\lambda\in P \}$ form an orthonormal basis of $\Lambda_{\mathbb{Q}}$ under the inner product \cite[p.64, (4.8)]{Mac}
\begin{align}\label{e:s}
\langle s_{\lambda},s_{\mu}\rangle=\delta_{\lambda\mu}.
\end{align}
The monomial symmetric function $m_{\lambda}=\sum_{\sigma}x^{\sigma(\lambda)}$, where $\sigma$ runs through distinct permutations of $\lambda$ as tuples. The complete symmetric functions $h_{\lambda}=h_{\lambda_{1}}h_{\lambda_{2}}\cdots$ and the elementary symmetric functions $e_{\lambda}=e_{\lambda_1}e_{\lambda_2}\cdots $ for $\la=(\la_1,\la_2,\cdots)$ form $\mathbb{Q}$-basis of $\Lambda_{\mathbb{Q}}$ respectively, where $h_{r}=\sum _{\lambda\vdash r}m_{\lambda}$, $e_r=m_{(1^r)}$.
%Note that the bases $(h_{\lambda})$ and $(m_{\lambda})$ are dual to each other:
%\begin{align*}
% \langle h_{\lambda}, m_{\mu}\rangle=\delta_{\lambda\mu}.
%\end{align*}

Define the $\mathbb{Q}$-linear and anti-involutive automorphism $\ast$ by
\begin{align}\label{e:fgh}
\langle fg,h\rangle=\langle g,f^{\ast}h\rangle
\end{align}
for any $f,g,h\in \Lambda_{\mathbb{Q}}$. It is clear from \eqref{e:p} that the adjoint $p^{\ast}_{m}=m\frac{\partial}{\partial p_{m}}$ for $m\in\mathbb N$.

An important further operation on symmetric functions is the plethysm \cite[Ch. X.II]{L1}\cite[Ch.I, \S 8]{Mac}.
 For $f\in\Lambda_{\mathbb Q}$, the plethysm $g \mapsto g \circ f$ on any  $g=\sum_{\lambda} c_{\lambda} p_{\lambda}\in\Lambda_{\mathbb Q}$ means
\begin{align}
g\circ f=\sum_{\lambda} c_{\lambda} \prod_{i=1}^{l(\lambda)} f\left(x_{1}^{\lambda_{i}}, x_{2}^{\lambda_{i}}, \ldots\right).
\end{align}
Thus the plethysm by $f$ is an algebra isomorphism of $\Lambda_{\mathbb Q}$ sending $p_{k} \mapsto$ $f\left(x_{1}^{k}, x_{2}^{k}, \ldots\right)$. Since the $p_{\la}$ ($\la\in P$) forms a base of $\Lambda_{\mathbb Q}$, the definition of plethysm is well-defined.

Let $f, g, h$ be symmetric functions. We have the following properties for plethysm \cite{L,L2, Mac}:
\begin{align}
(f+g)\circ h&=f\circ h+g\circ h,\\
(fg)\circ h&=(f\circ h)(g\circ h),\\
f\circ p_k&=p_k \circ f.
\end{align}

\section{Main results}\label{s:pMN}
In this section, a determinant-type plethystic Murnaghan--Nakayama rule is established with the help of the vertex operator realization.

\subsection{Vertex operators}Recall the vertex operator realization of Schur functions \cite{J1}. Define the vertex operator $S(z)$ and its adjoint operator $S^{\ast}(z)$
from $\Lambda_{\mathbb Q}$ to $\Lambda[z, z^{-1}]=\Lambda\otimes \mathbb Q[z, z^{-1}]$ (cf. $t = 0$ in \cite{J2}) by
\begin{align}
S(z) &=\exp \left(\sum_{m=1}^{\infty} \frac{p_{m}}{m} z^{m}\right) \exp \left(-\sum_{m=1}^{\infty} \frac{\partial}{\partial p_{m}} z^{-m}\right)=\sum_{m \in \mathbb{Z}} S_{m} z^{m}, \\
S^{\ast}(z) &=\exp \left(-\sum_{m=1}^{\infty} \frac{p_{m}}{m} z^{m}\right) \exp \left(\sum_{m=1}^{\infty} \frac{\partial}{\partial p_{m}} z^{-m}\right)=\sum_{m \in \mathbb{Z}} S_{m}^{\ast} z^{-m},
\end{align}
where $p_m$ acts by multiplication in $\Lambda_{\mathbb{Q}}$. Note that the components $S_n, S_n^*$ are linear operators on $\Lambda_{\mathbb{Q}}$.
 In this picture, the space $\Lambda_{\mathbb Q}$ can be viewed as a highest weight representation or the Fock space of the infinite-dimensional Heisenberg Lie algebra
 generated by the $p_n$ and its adjoint operator. The vacuum vector or the highest weight vector is the vector $1$ (cf. \cite{J2, J3}).
\begin{prop}
Let $1$ be the {\it vacuum vector} of $\Lambda_\mathbb{Q}$, we have
\begin{enumerate}
\item The components of $S(z)$ and $S^{*}(z)$ satisfy the following commutation relations \cite[Prop. (2.12) for $t = 0$]{J2}:
\begin{align}
S_{m} S_{n}+S_{n-1} S_{m+1}&=0, \\
S_{m}^{*} S_{n}^{*}+S_{n+1}^{*} S_{m-1}^{*}&=0,\\
S_{m} S_{n}^{*}+S_{n-1}^{*} S_{m-1}&=\delta_{m, n}.
\end{align}
\item If $m>0$, then by \cite[Eq. (2.11) for $t = 0$]{J2}, \cite[Eq. (2.12) ]{J3}
\begin{align}\label{e:h_{m}}
h_{m}=S_{m}.1=\sum_{\lambda \vdash m} \frac{p_{\lambda}}{z_{\lambda}}.
\end{align}
\item For any composition $\mu=(\mu_{1},\ldots,\mu_{l})$, then by \cite[Prop. (2.17) for $t = 0$, Prop. (4.3)]{J2}
 $$s_{\mu}=\prod_{i<j}(1-R_{ij})h_{\mu_{1}}h_{\mu_{2}}\cdots h_{\mu_{l}}=S_{\mu_{1}}S_{\mu_{2}}\cdots S_{\mu_{l}}.1.$$
In general, let $\delta=(l-1,l-2,\ldots,1,0),$ then $s_{\mu}=0$ or $\pm s_{\lambda}$ for a partition $\lambda$ such that $\lambda\in \mathfrak{S}_{l}(\mu+\delta)-\delta.$
\end{enumerate}
\end{prop}

\subsection{Plethystic Murnaghan--Nakayama rule via determinants}
For a given positive integer $n$, we introduce the operators $V^{(n)}_{m}$ as follows:
\begin{align}\label{e:V}
V^{(n)}(z)=\exp \left(\sum_{m=1}^{\infty} \frac{p_{n m}}{m} z^{m}\right)=\sum_{m \geq 0} V^{(n)}_{m} z^{m}.
\end{align}

The adjoint operator $V^{(n)*}(z)$ with respect to the canonical inner product \eqref{e:p} can be written as:
\begin{align}\label{e:V^{*}}
V^{(n)*}(z)=\exp \left(\sum_{m=1}^{\infty} n \frac{\partial}{\partial p_{nm}} z^{-m}\right)=\sum_{m \geq 0} V_{m}^{(n)*} z^{-m}.
\end{align}

By \eqref{e:V} and \eqref{e:V^{*}}, we obtain the following results.
\begin{lem} Let $n$ be a positive integer, then
\begin{align}
V^{(n)}_{m}&=\delta_{m, 0} ~~(m\leq 0); \quad V_{m}^{(n)*}.1=\delta_{m, 0} ~~ (m\in \mathbb{Z}),\\ \label{e:plethysm}
V^{(n)}_{m}&=\sum_{\lambda \vdash m} \frac{p_{n \lambda}}{z_{\lambda}}=\sum_{\lambda \vdash m} \frac{p_{\lambda}(x_1^n, x_2^n,\cdots)}{z_{\lambda}}=h_m(x_1^n, x_2^n,\cdots)=p_n\circ h_m, ~~ (m> 0),
\end{align}
where $n\la=(n\la_1,n\la_2,\cdots)$.
\end{lem}

Using \eqref{e:s} and \eqref{e:fgh}, the coefficients $a^{\lambda}_{(n,k)\mu}$ in the plethystic Murnaghan--Nakayama rule can be expressed via vertex operators: %(cf. \cite[Thm. 3.2 for Schur $Q$]{CJL}):
\begin{align}\label{e:coefficient}
\begin{split}
a^{\lambda}_{(n,k)\mu}=&\langle V^{(n)}_{k}s_{\mu}, s_{\lambda} \rangle\\
=&\langle V^{(n)}_{k}S_{\mu_1}S_{\mu_2}\cdots.1, S_{\lambda_1}S_{\lambda_2}\cdots.1 \rangle\\
=&\langle S_{\mu_1}S_{\mu_2}\cdots.1, V^{(n)*}_{k}S_{\lambda_1}S_{\lambda_2}\cdots.1 \rangle.
\end{split}
\end{align}

The usual vertex operator calculus gives that
\begin{align}
&V^{(n)*}(z) S(w)=S(w) V^{(n)*}(z)\left(1-\frac{w^{n}}{z}\right)^{-1},
\end{align}
which coefficient of $z^{-k} w^{m}$ is
\begin{align}\label{e:Vk*Sm}
V^{(n)*}_{k}S_{m}=\sum_{r=0}^{k}S_{m-nr}V^{(n)*}_{k-r}.
\end{align}

Applying both sides of \eqref{e:Vk*Sm} to the vacuum vector $1$ gives that
\begin{align}
V^{(n)*}_{k}S_{m}.1=S_{m-nk}.1.
\end{align}

Now we give one of the main results of this section.

\begin{thm}\label{t:V*S} Let $\lambda=\left(\lambda_{1}, \cdots, \lambda_{l}\right)\in \mathcal{P}$. For any nonnegative integer $k$, we have that
\begin{align}\label{e:V*S}
V_{k}^{(n)*} S_{\lambda_{1}} \cdots S_{\lambda_{l}}.1=\sum_{\nu \models k} S_{\lambda-n \nu}.1
\end{align}
where $n\nu=(n\nu_1,n\nu_2,\cdots,n\nu_l)$, $S_{\lambda-n \nu}.1=S_{\lambda_1-n \nu_1}S_{\lambda_2-n \nu_2}\cdots S_{\lambda_l-n \nu_l}.1.$
\end{thm}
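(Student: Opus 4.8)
The plan is to proceed by induction on $l = l(\lambda)$, using the commutation relation \eqref{e:Vk*Sm} as the engine that moves $V_k^{(n)*}$ past one $S$-operator at a time, together with the boundary condition $V_m^{(n)*}.1 = \delta_{m,0}$ from the lemma. For the base case $l=0$ we need $V_k^{(n)*}.1 = \delta_{k,0}$, which is exactly $\sum_{\nu \models k, \, l(\nu)=0} S_\emptyset .1$ (the empty composition of $k$ exists iff $k=0$); this is the stated boundary value. For the inductive step, write $V_k^{(n)*} S_{\lambda_1} S_{\lambda_2}\cdots S_{\lambda_l}.1$ and apply \eqref{e:Vk*Sm} with $m = \lambda_1$:
\begin{align*}
V_k^{(n)*} S_{\lambda_1} S_{\lambda_2}\cdots S_{\lambda_l}.1 = \sum_{r=0}^{k} S_{\lambda_1 - nr} V_{k-r}^{(n)*} S_{\lambda_2}\cdots S_{\lambda_l}.1.
\end{align*}

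Now apply the induction hypothesis to each term $V_{k-r}^{(n)*} S_{\lambda_2}\cdots S_{\lambda_l}.1$, which is a product of $l-1$ operators applied to $1$; it equals $\sum_{\nu' \models k-r} S_{(\lambda_2,\ldots,\lambda_l) - n\nu'}.1$, the sum over compositions $\nu' = (\nu_2,\ldots,\nu_l)$ of $k-r$ into $l-1$ nonnegative parts. Substituting back, the outer sum over $r \in \{0,\ldots,k\}$ together with the inner sum over $\nu' \models k-r$ reassembles precisely into a single sum over all compositions $\nu = (r, \nu_2, \ldots, \nu_l) = (\nu_1,\ldots,\nu_l) \models k$ with $l$ nonnegative parts, and the operator that appears is $S_{\lambda_1 - n\nu_1} S_{\lambda_2 - n\nu_2}\cdots S_{\lambda_l - n\nu_l}.1 = S_{\lambda - n\nu}.1$. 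This gives exactly the right-hand side of \eqref{e:V*S}, completing the induction.

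The only genuine subtlety — and the step I would write most carefully — is the bookkeeping that identifies $\{(r,\nu') : 0 \le r \le k, \ \nu' \models k-r \text{ with } l-1 \text{ parts}\}$ with $\{\nu \models k \text{ with } l \text{ parts}\}$; this is a triviality about compositions but must be stated so the reader sees the index sets match on the nose (in particular that $r$ ranges over all of $0,\ldots,k$ and not a restricted range, which is why \eqref{e:Vk*Sm} having its sum run over the full interval $0 \le r \le k$ is essential). One should also note that no convergence or finiteness issue arises: although $\nu_1$ can in principle make $\lambda_1 - n\nu_1$ very negative, each $S_{\lambda - n\nu}.1$ is a well-defined element of $\Lambda$ (possibly $0$ or $\pm s_\kappa$ by Proposition on the straightening of $S$-operators), and the sum over $\nu \models k$ is finite since $k$ is fixed. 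I do not expect the commutation relation itself to be an obstacle — it is quoted as \eqref{e:Vk*Sm} — so the proof is essentially a clean one-variable induction with a combinatorial re-indexing at its heart.
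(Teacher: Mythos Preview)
Your proposal is correct and follows exactly the approach the paper indicates: induction on $l(\lambda)$ using the commutation relation \eqref{e:Vk*Sm}, with the base case supplied by $V_m^{(n)*}.1=\delta_{m,0}$. The paper's proof is in fact just the one-line remark that this induction works, and your write-up fills in precisely those details.
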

\begin{proof} This can be obtained by  induction on $l(\lambda)$ using \eqref{e:Vk*Sm}.
\end{proof}

Using the idea of \cite[\S 4]{JL1}, we define an $l(\lambda)\times l(\lambda)$ matrix $\M(\lambda/\mu)=(m_{ij})$, where
\begin{align}\label{e:m_{ij}}
m_{ij}=
\begin{cases}
1 & \text {if $\lambda_{i}-i\geq\mu_{j}-j$, $n|(\lambda_{i}-i-\mu_{j}+j$)},\\
0 & \text {otherwise,}
\end{cases}
\quad 1\leq i,j\leq l(\la).
\end{align}
Note that we add $0's$ at the end of $\mu$ if $l(\mu)<l(\la)$.

Similar to \cite[\S 4]{JL1}, we have
\begin{align}\label{e:V*}
V_{k}^{(n)*} S_{\lambda_{1}} \cdots S_{\lambda_{l}}.1=\sum_{\nu \models k} S_{\lambda-n \nu}.1=\sum_{\lambda\supset\mu\vdash |\lambda|-nk}{\rm det} (\M(\lambda/\mu))S_{\mu}.1.
\end{align}

Combining \eqref{e:coefficient} with \eqref{e:V*} we then have $a^{\lambda}_{(n,k)\mu}={\rm det} (\M(\lambda/\mu))$, which derives the following determinant-type plethystic Murnaghan--Nakayama rule.
\begin{thm} \label{t:algMN}
 For a given $n\in\mathbb{N}_{+}$ and any $k\in\mathbb{N}_{+}$, $\mu\in P$, then
\begin{align}\label{e:pmn}
(p_{n}\circ h_{k})s_{\mu}=\sum_{\mu\subset\lambda\vdash |\mu|+nk}{\rm det} (\M(\lambda/\mu))s_{\lambda}.
\end{align}
\end{thm}
As the matrix $\M(\lambda/\mu)$ is defined by only using data of involved partitions, this version of the Murnaghan-Nakayama rule can
be effectively computed by algebraic method.

\subsection{A combinatorial interpretation for $\det(\M(\la/\mu))$}
A nature question arises whether our determinant-type plethystic Murnaghan--Nakayama rule is equivalent to the combinatorial one. To establish the equivalence, we give a combinatorial interpretation for $\det(\M(\la/\mu))$.

Let $\nu=(\nu_1,\nu_2,\cdots,\nu_r)$ be a composition of $k$. The classical Murnaghan--Nakayama rule for irreducible characters of the symmetric group (see \eqref{e:MN}) tells us
the coefficient of $s_{\la}$ in $p_{n\nu_1}p_{n\nu_2}\cdots p_{n\nu_r}s_{\mu}$ will vanish unless $\la/\mu$ can be decomposed into an $n$-border strip chain of weight $(\nu_r,\cdots,\nu_2,\nu_1)$.

It follows from \eqref{e:plethysm} that
$$(p_n\circ h_k)s_{\mu}=V^{(n)}_{k}s_{\mu}=\sum_{\nu=(\nu_{1},\cdots,\nu_{l})\vdash k}\frac{p_{n\nu_{1}}\cdots p_{n\nu_{l}}s_{\mu}}{z_{\nu}}.$$
Therefore, only those $\la's$ contribute in the summation in \eqref{e:pmn} such that $\la/\mu$ is decomposable into an $n$-border strip chain of some weight $\rho$ ($\rho\models k$). Namely, $\det(\M(\la/\mu))=0$ unless $\D(\la/\mu)\neq\varnothing$ with the notations in Sec. \ref{s:prelim}.

%when $k=1$
%\begin{align}
%\det(M(\la/\mu))=
%\begin{cases}
%(-1)^{ht(\la/\mu)}, & \text{if $\la/\mu$ is a $n$-border strip};\\
%0, & \text{otherwise.}
%\end{cases}
%\end{align}
%
%\textcolor{red}{\bf Next, we naturally need to find the necessary and sufficient conditions for ${\rm det} (M(\lambda/\mu))$ to be non-zero. In particular, when $k=1$, by \cite{JL3} we have
%%We have already obtained the classical Murnaghan-Nakayama rule by vertex operators in \cite{JL2}. If $k=1$, then we have
%\begin{align} \label{e:k=1}
%{\rm det}(M(\lambda/\mu))=
%\begin{cases}
%0, & \text {if $\lambda/\mu$ is not an $n$-border strip},\\
%(-1)^{spin(\lambda/\mu)}, & \text {if $\lambda/\mu$ is an $n$-border strip}.
%\end{cases}
%\end{align}
%}
%\textcolor{red}{\bf
%\begin{rem} From \eqref{e:plethysm} it follows that
%$$(p_n\circ h_k)s_{\mu}=V^{(n)}_{k}s_{\mu}=\sum_{\nu=(\nu_{1},\cdots,\nu_{l})\vdash k}\frac{p_{n\nu_{1}}\cdots p_{n\nu_{l}}s_{\mu}}{z_{\nu}}.$$
%Therefore, \eqref{e:k=1} implies the sum of \eqref{e:pmn} runs over all $\lambda\supset\mu\vdash |\lambda|-nk$ such that $\lambda/\mu$ is an $n$-border strip of weight $\tau=(n\nu_{1},\cdots,n\nu_{l})$. This is why all chains we discussed in the previous section are integer multiples of $n$.
%\end{rem}
%}

Next, we introduce an $l(\lambda)\times l(\lambda)$ control matrix $\A(\lambda/\mu)=(a_{ij})$ of $\M(\lambda/\mu)$, whose entries are defined by
\begin{align}\label{e:a_{ij}}
a_{ij}=\lambda_i-i-\mu_j+j, \quad 1\leq i, j\leq l(\lambda)
\end{align}
%where $\lambda=\left(\lambda_{1}, \cdots, \lambda_{l(\mu)}\right)\supset\mu=\left(\mu_{1}, \cdots, \mu_{l(\la)}\right)$ are two partitions and $\mu_i=0,$ $i=l(\mu)+1,\cdots,l(\lambda)$ if $l(\mu)<l(\lambda)$.
Then $m_{ij}=1$ if $n|a_{ij}\geq0$ and $0$ otherwise. Sometimes we adapt the notation $\A(\la/\mu)_{ij}$ to present the $i$th row and $j$th column entry in $\A(\la/\mu)$. As we can observe that the matrix $\A(\lambda/\mu)$ has the following properties:
\begin{enumerate}[label={\bf(\Alph*)}]
\item the entries along each row (or column) are strictly
increasing from left to right (decreasing from top to bottom);
%$A(\lambda/\mu)$ is strictly increasing in each row and strictly decreasing in each column;

\item the difference between any two entries in each column (resp. rows) is fixed and independent of the rows (resp. columns).
\end{enumerate}

Now we introduce two operations on matrices. Let $B=(b_{ij})_{m\times m}$ be an arbitrary matrix and $n$ a fixed integer, define
\begin{enumerate}
 \item the column transformation $C_{k,l}^{p}$ of matrix $B$ $(k<l)$: Firstly, columns $k, k+1, \ldots, l-1, l$ are cyclically rotated to columns $l, k, \ldots, l-2, l-1$. Then the rotated column $l$ is added with the column vector $(np,np,\cdots,np)^{T}$.

 \item the row transformation $R_{r,s}^p$ of matrix $B$ $(r>s)$: Firstly, rows $s, s+1, \ldots, r-1, r$ are cyclically rotated to rows $s+1, s+2, \ldots, r, s$. Then the rotated row $s$ is added with the row vector $(np,np,\cdots,np)$.
\end{enumerate}

%$(\rmnum{1})$ The column transformation $C_{k,l}^{p}$ of matrix $B$ $(k<l)$: Removing the $k$-column, shifting one unit to the left from the $(k+1)$-column to the $l$-column, and put the $k$-column plus $pn$ in the $l$-column;

%$(\rmnum{2})$ The row transformation $R_{r,s}^p$ of matrix $B$ $(r>s)$: Removing the $r$-row, moving down one unit from the $(r-1)$-row to the $s$-row, and put the $r$-row plus $pn$ in the $s$-row.\\
For instant, suppose $B=(b_{ij})_{4\times 4}$, then
\begin{align*}
C_{1,3}^p(B)=\left(\begin{array}{cccc}
b_{12}&b_{13}&b_{11}+np&b_{14}\\
b_{22}&b_{23}&b_{21}+np&b_{24}\\
b_{32}&b_{33}&b_{31}+np&b_{34}\\
b_{42}&b_{43}&b_{41}+np&b_{44}\\
\end{array}\right), R_{3,1}^p(B)=\left(\begin{array}{cccc}
b_{31}+np&b_{32}+np&b_{33}+np&b_{34}+np\\
b_{11}&b_{12}&b_{13}&b_{14}\\
b_{21}&b_{22}&b_{23}&b_{24}\\
b_{41}&b_{42}&b_{43}&b_{44}\\
\end{array}\right).
\end{align*}
We adapt the short-hand notations $C_{k,l}$ and $R_{r,s}$ for $C_{k,l}^1$ and $R_{r,s}^1$ respectively. We also denote by $\c_d(B)$ (resp. $\r_d(B)$) the $d$th column (resp. row) vector of matrix $B$.
%\textcolor{red}{\bf For convenience, we write $C_{k,l}$(resp. $R_{r,s})$ for $C_{k,l}^1$(resp. $R_{r,s}^1)$ in short.}

%Next we need to study the relations between $A(\lambda/\mu)$ and $A(\alpha^{i}/\alpha^{i-1})$. Before that, some useful identities are given as follows:
\begin{lem}
Let $\pi\subset\varpi$ be any two partitions and $\varpi/\pi$ is a border strip with $r+1$ rows. Suppose the starting box of $\varpi/\pi$ occupies the $p$th row of $\varpi$ and $\varpi/\pi$ has $a_{p+i}$ boxes in the $(p+i)$th row of $\varpi$ ($i=0,1,\cdots,r$). Then we can write $\varpi/\pi$ as $(\underbrace{0,\cdots,0}\limits_{p-1},\underbrace{a_{p},\cdots,a_{p+r}}\limits_{r+1},\underbrace{0,\cdots,0}\limits_{l(\varpi)-p-r})$. With these notations, we have the following simple facts
\begin{align}\label{e:identity1}
\begin{cases}
a_{p}=\varpi_{p}-\pi_{p}\\
a_{d+1}=\pi_d-\pi_{d+1}+1& ~\text{for $d=p,p+1,\ldots,p+r-1$}.
\end{cases}
\end{align}
and
\begin{align}\label{e:identity3}
\begin{cases}
a_{p+r}=\varpi_{p+r}-\pi_{p+r}\\
a_{d}=\varpi_{d}-\varpi_{d+1}+1& ~\text{for $d=p,p+1,\ldots,p+r-1$}.
\end{cases}
\end{align}
Taking sum on both sides in \eqref{e:identity1} or \eqref{e:identity3} further gives
\begin{align}\label{e:identity2}
\varpi_{p}-\pi_{p+r}+r=|\varpi/\pi|.
\end{align}
%\begin{align}
%\left\{\begin{array}{ll}
%a_{d+1}=\pi_d-\pi_{d+1}+1& \text {for $d=p,p+1,\ldots,p+r-1$};\\
%a^{(i)/(i-1)}_{h}=\alpha^{(i)}_{h}-\alpha^{(i-1)}_{h}& \text {if $h=j_{i}$}.
%\end{array}\right.
%\end{align}
\end{lem}

 %\begin{lem}\label{t:identity} Let
%$$
%\mu=\alpha^{(0)}\subset\alpha^{(1)}\subset\alpha^{(2)}\subset\cdots\subset\alpha^{(k)}=\lambda
%$$
%be an $n$-border strip of weight $\tau=(\tau_1,\tau_2,\cdots,\tau_k).$ \\Suppose $\alpha^{(i)}/\alpha^{(i-1)}=(0,\cdots,0,a^{(i)/(i-1)}_{j_i},a^{(i)/(i-1)}_{j_i+1},\cdots,a^{(i)/(i-1)}_{j_i+r_i},0,\cdots,0)$, $i=1,2,\cdots,k$. Then
%\begin{align}\label{ch2-identity1}
%\left\{\begin{array}{ll}
%a^{(i)/(i-1)}_{h+1}=\alpha^{(i-1)}_{h}-\alpha^{(i-1)}_{h+1}+1& \text {if $h=j_{i},j_{i}+1,\ldots,j_{i}+r_{i}-1$};\\
%a^{(i)/(i-1)}_{h}=\alpha^{(i)}_{h}-\alpha^{(i-1)}_{h}& \text {if $h=j_{i}$}.
%\end{array}\right.
%\end{align}
%and
%\begin{align}\label{ch2-identity2}
%\left\{\begin{array}{ll}
%a^{(i)/(i-1)}_{h}=\alpha^{(i)}_{h}-\alpha^{(i)}_{h+1}+1& \text {if $h=j_{i},j_{i}+1,\ldots,j_{i}+r_{i}-1$};\\
%a^{(i)/(i-1)}_{h}=\alpha^{(i)}_{h}-\alpha^{(i-1)}_{h}& \text {if $h=j_{i}+r_{i}$}.
%\end{array}\right.
%\end{align}
%As a consequence of \eqref{ch2-identity1} and \eqref{ch2-identity2} we have
%\begin{align}\label{ch2-identity3}
%a^{(i)}_{j_{i}}-a^{(i-1)}_{j_{i}+r_{i}}+r_{i}=\tau_{i}n
%\end{align}
%\begin{proof} It is easy to obtain \eqref{ch2-identity1} and \eqref{ch2-identity2} by observing Young diagram $\la/\mu$ (see Figure 6). Taking sum the left and right sides of \eqref{ch2-identity1} for $h=j_{i},j_{i}+1,\ldots,j_{i}+r_{i}$, and combining with $|\alpha^{(i)}/\alpha^{(i-1)}|=\sum_{h=j_{i}}^{j_{i}+r_{i}}a^{(i)/(i-1)}_{h}=\tau_{i}n$ we then have Eq. \eqref{ch2-identity3}.
%\end{proof}
%\end{lem}
\begin{prop}\label{t:transform} Let $\mu=(\mu_1,\mu_2,\cdots,\mu_l)\subset\lambda=(\lambda_1,\lambda_2,\cdots,\lambda_l)$ be two partitions and
$\mu\subset\nu\subset\lambda.$
\begin{enumerate}
\item If $\nu/\mu=(\underbrace{0,\cdots,0}\limits_{j_1-1},a_{j_1},\cdots,a_{j_s},\underbrace{0,\cdots,0}\limits_{l-j_s})$ is a $\tau_1n$-border strip, then $$\A(\lambda/\mu)=C_{j_1,j_s}^{\tau_1}(\A(\lambda/\nu));$$\\
\item If $\lambda/\nu=(\underbrace{0,\cdots,0}\limits_{i_1-1},b_{i_1},\cdots,b_{i_r},\underbrace{0,\cdots,0}\limits_{l-i_r})$ is a $\tau_2n$-border strip, then $$\A(\lambda/\mu)=R_{i_r,i_1}^{\tau_2}(\A(\nu/\mu)).$$
\end{enumerate}
\end{prop}
\begin{proof}
$(1)$  By definitions
\begin{align}\label{e:nu}
\nu_{d}=
\begin{cases}
\mu_{d}& \text {if $1\leq d\leq j_1-1$};\\
\mu_{d}+a_{d}& \text {if $j_1\leq d\leq j_s$};\\
\mu_{d}& \text {if $j_s+1\leq d\leq l$}.
\end{cases}
\end{align}
 %Let $c_{h}(A)$ be the $h$th column vector of matrix $A$. From \eqref{e:a_{ij}} we have
 And
\begin{align*}
\c_{d}(\A(\lambda/\mu))=\left(\begin{array}{c}
\lambda_{1}-1-\mu_{d}+d\\
\lambda_{2}-2-\mu_{d}+d\\
\vdots\\
\lambda_{l}-l-\mu_{d}+d\\
\end{array}\right), \c_{d}(\A(\lambda/\nu))=\left(\begin{array}{c}
\lambda_{1}-1-\nu_{d}+d\\
\lambda_{2}-2-\nu_{d}+d\\
\vdots\\
\lambda_{l}-l-\nu_{d}+d\\
\end{array}\right).
\end{align*}
We check $(1)$ case by case.
\begin{itemize}
\item If $1\leq d\leq j_{1}-1$ or $j_s+1\leq d\leq l$, then clearly $\c_{d}(\A(\lambda/\mu))=\c_{d}(\A(\lambda/\nu))$.

\item If $j_1\leq d\leq j_s-1$, then by \eqref{e:nu}
$
\c_{d+1}(\A(\lambda/\nu))=\left(\begin{array}{c}
\lambda_{1}-1-\mu_{d+1}-a_{d+1}+d+1\\
\lambda_{2}-2-\mu_{d+1}-a_{d+1}+d+1\\
\vdots\\
\lambda_{l}-l-\mu_{d+1}-a_{d+1}+d+1\\
\end{array}\right).
$
It follows from \eqref{e:identity1} that
\begin{align*}
a_{d+1}=\mu_{d}-\mu_{d+1}+1, \quad \text {$j_1\leq d\leq j_s-1$}.
\end{align*}
Therefore $\c_{d}(\A(\lambda/\mu))=\c_{d+1}(\A(\lambda/\nu)), ~d=j_{1},\cdots,j_{s}-1$.

\item If $d=j_s$, by \eqref{e:identity2} we then have $-\mu_{j_s}+j_s=\tau_{1}n-\nu_{j_1}+j_1.$
%\begin{align*}
%\sum_{h=j_{1}+1}^{j_{s}}a_{h}=\mu_{j_1}-\mu_{j_s}+j_{s}-j_{1}.
%\end{align*}
%We Combine with $\sum_{h=j_{1}}^{j_{s}}a_{h}=\tau_{1}n$ to obtain that
So
$$
\c_{j_{s}}(\A(\lambda/\mu))=\c_{j_{1}}(\A(\lambda/\nu))+(\tau_1n, \cdots, \tau_1n)^T.
$$
\end{itemize}
Summarizing the three cases, we prove that $\A(\lambda/\mu)=C_{j_1,j_s}^{\tau_1}(\A(\lambda/\nu))$.

$(2)$ This can be derived similarly to $(1)$. Just note that the following fact
\begin{align*}
\nu_{d}=\begin{cases}
\lambda_{d}& \text {if $1\leq d\leq i_1-1$}\\
\lambda_{d}-b_{d}& \text {if $i_1\leq d\leq i_r$}\\
\lambda_{d}& \text {if $i_r<d \leq l$},
\end{cases}%
%\qquad b_{d}=\lambda_{d}-\lambda_{d+1}+1  \text{for}\quad d=i_{1},i_{1}+1,\cdots,i_{r}-1 \qquad\text{and}\quad \lambda_{i_1}-i_{1}=\lambda_{i_r}-i_r-b_{i_r}+\tau_{2}n.
\end{align*}
which together with \eqref{e:identity3} gives $(2)$, completing the proof.
 %$\nu_{d}=\begin{cases}
%\lambda_{d}& \text {if $1\leq d\leq i_1-1$}\\
%\lambda_{d}-b_{d}& \text {if $i_1\leq d\leq i_r$}\\
%\lambda_{d}& \text {if $i_r<d \leq l$}
%\end{cases}$, and
%$b_{h}=\lambda_{h}-\lambda_{h+1}+1,$ $h=i_{1},i_{1}+1,\cdots,i_{r}-1$. Moreover, $\lambda_{i_1}-i_{1}=\lambda_{i_r}-i_r-b_{i_r}+\tau_{2}n.$ Therefore, $r_{i_{1}}(A(\lambda/\mu))=r_{i_{r}}(A(\alpha^{(1)}/\mu))+\left(\tau_{2}n,\tau_{2}n,\cdots,\tau_{2}n\right).$
\end{proof}
%\textcolor{red}{\bf More generally, Proposition \ref{t:transform} can be generalized to the general case.}
The following is a straightforward conclusion by repeated applying Prop. \ref{t:transform}.
\begin{cor}\label{t:transform2}
Let
$$
\mu=\alpha^{(0)}\subset\alpha^{(1)}\subset\alpha^{(2)}\subset\cdots\subset\alpha^{(k)}=\lambda
$$
be an $n$-border strip chain of weight $\tau=(\tau_1,\tau_2,\cdots,\tau_k).$
If we denote $$\alpha^{(i)}/\alpha^{(i-1)}=(\underbrace{0,\cdots,0}\limits_{j_i-1},a^{(i)}_{j_i},a^{(i)}_{j_i+1},\cdots,a^{(i)}_{j_i+r_i},
\underbrace{0,\cdots,0}\limits_{l(\la)-j_i-r_i}),\quad i=1,2,\cdots,k.$$ Then
\begin{align*}
\A(\lambda/\mu)=\left(\prod_{1\leq b\leq i-1}C_{j_b,j_b+r_b}^{\tau_b}\right)\left(\prod_{k\geq a\geq i+1}R_{j_a+r_a,j_a}^{\tau_a}\right)\A(\alpha^{(i)}/\alpha^{(i-1)})
\end{align*}
where
\begin{align*}
\prod_{1\leq b\leq i-1}C_{j_b,j_b+r_b}^{\tau_b}&:=C_{j_1,j_1+r_1}^{\tau_1}C_{j_2,j_2+r_2}^{\tau_2}\cdots C_{j_{i-1},j_{i-1}+r_{i-1}}^{\tau_{i-1}};\\ \prod_{k\geq a\geq i+1}R_{j_a+r_a,j_a}^{\tau_a}&:=R_{j_k+r_k,j_k}^{\tau_k}R_{j_{k-1}+r_{k-1},j_{k-1}}^{\tau_{k-1}}\cdots R_{j_{i+1}+r_{i+1},j_{i+1}}^{\tau_{i+1}}.
\end{align*}
\end{cor}

In the following two theorems, we give a combinatorial interpretation for $\det(\M(\la/\mu))$, which demonstrates the equivalence of the determinant-type plethystic Murnaghan--Nakayama rule and the combinatorial one.% Next we state the main results of this paper.}
\begin{thm}\label{t:horizontal}
Let $\mu=(\mu_1,\mu_2,\cdots,\mu_l)\subset\lambda=(\lambda_1,\lambda_2,\cdots,\lambda_l)$ be two partitions. If $\lambda/\mu$ is a horizontal $n$-border strip of weight $k$, then $\rm{det}(\M(\lambda/\mu))=\sgn(\lambda/\mu).$
\end{thm}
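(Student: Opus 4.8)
The plan is to reduce, via Corollary~\ref{t:characterization}, to a horizontal $n$-border strip chain of weight $k$, and then to compute $\det M(\lambda/\mu)$ by repeatedly applying the column transformations of Proposition~\ref{t:transform}, one strip at a time, while tracking their effect on the $0$--$1$ matrix $M$.

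Since $\lambda/\mu$ is a horizontal $n$-border strip of weight $k$, Corollary~\ref{t:characterization} gives $m(\lambda/\mu)=k$, hence a horizontal $n$-border strip chain
\[\mu=\alpha^{(0)}\subset\alpha^{(1)}\subset\cdots\subset\alpha^{(k)}=\lambda\]
all of whose steps $\alpha^{(i)}/\alpha^{(i-1)}$ are single $n$-border strips; say $\alpha^{(i)}/\alpha^{(i-1)}$ occupies rows $j_i,j_i+1,\dots,j_i+r_i$, so that $spin(\alpha^{(i)}/\alpha^{(i-1)})=r_i$ and $sgn(\lambda/\mu)=\prod_{i=1}^{k}(-1)^{r_i}$ for this chain. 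Applying Proposition~\ref{t:transform}(i) to the truncated chain $\alpha^{(i-1)}\subset\alpha^{(i)}\subset\lambda$ gives
\[A(\lambda/\alpha^{(i-1)})=C^{1}_{j_i,\,j_i+r_i}\bigl(A(\lambda/\alpha^{(i)})\bigr),\qquad i=k,k-1,\dots,1,\]
which telescopes (cf. Remark~\ref{t:transform2}): starting from $A(\lambda/\lambda)$ and performing these $k$ column transformations in turn produces $A(\lambda/\mu)$, the $i$-th one inserting the strip $\alpha^{(i)}/\alpha^{(i-1)}$. Note also that each tail $\lambda/\alpha^{(i)}$ remains a horizontal $n$-border strip, since a box of $\lambda/\alpha^{(i)}\subset\lambda/\mu$ that is the top box of its column in $\lambda/\mu$ is still the top box of its column in $\lambda/\alpha^{(i)}$. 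I would then induct downward on $i$, with the claim $\det M(\lambda/\alpha^{(i)})=\prod_{t=i+1}^{k}(-1)^{r_t}$; the base case $i=k$ is $\det M(\lambda/\lambda)=1$, which holds because $M(\lambda/\lambda)$ is upper triangular with $1$'s on the diagonal.

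For the inductive step I would analyze $C^{1}_{p,q}$ with $p=j_i$, $q=j_i+r_i$: unwinding the definition, it cyclically permutes columns $p,p+1,\dots,q$ and then adds $n$ to the column that ends in position $q$. On the $0$--$1$ matrices, the cyclic permutation of $q-p+1=r_i+1$ columns contributes the sign $(-1)^{r_i}$ to the determinant; and since adding a multiple of $n$ does not change divisibility by $n$, the only possible discrepancy between $M(\lambda/\alpha^{(i-1)})$ and the cyclically column-permuted $M(\lambda/\alpha^{(i)})$ is one extra $1$ in column $q$, in the unique row $i_0$ (if any; there is at most one, by property~(i) of $A$) where the entry of $A(\lambda/\alpha^{(i)})$ in column $j_i$ equals exactly $-n$. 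Expanding the determinant along column $q$, the term without the extra $1$ is $(-1)^{r_i}\det M(\lambda/\alpha^{(i)})$, and the leftover term is $\pm$ the minor of $M(\lambda/\alpha^{(i)})$ obtained by deleting row $i_0$ and column $j_i$; showing that this minor vanishes (equivalently, that the discrepancy does not occur) is the main obstacle, and it is exactly here that the hypothesis $s^i\in(\lambda/\mu)\uparrow$ is used — concretely, it translates to $\mu_{j_i-1}\ge\alpha^{(i)}_{j_i}$ (vacuous when $j_i=1$), from which one should deduce either that the entries $\lambda_{i'}-i'-\alpha^{(i)}_{j_i}+j_i$ of that column never equal $-n$, or that the resulting submatrix of $M(\lambda/\alpha^{(i)})$ is singular because of the border-strip structure of $\lambda/\alpha^{(i)}$. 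Granting this step, the induction yields $\det M(\lambda/\mu)=\prod_{i=1}^{k}(-1)^{r_i}=sgn(\lambda/\mu)$ (which in particular also shows that $sgn(\lambda/\mu)$ does not depend on the horizontal chain chosen).
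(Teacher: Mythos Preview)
Your framework is exactly the paper's: induct on $k$, use Proposition~\ref{t:transform} to write $A(\lambda/\mu)=C_{j_1,j_1+r_1}(A(\lambda/\alpha^{(1)}))$, observe that on the $0$--$1$ matrix this is a cyclic shift of $r_1+1$ columns (so contributes $(-1)^{r_1}$) \emph{provided} the added $n$ in the moved column creates no new $1$'s, i.e.\ provided no entry of the $j_1$-th column of $A(\lambda/\alpha^{(1)})$ equals $-n$. You correctly flag this as the crux and leave it open.

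The paper does not attack it via your proposed condition $\mu_{j_i-1}\ge\alpha^{(i)}_{j_i}$ (which records that the box \emph{above} $s^i$ lies in $\mu$ and so controls the wrong end of the strip), nor does it show any minor vanishes. It simply bounds the column entries. From the border-strip identity $\alpha^{(1)}_{j_1}-\mu_{j_1+r_1}+r_1=n$ and $\alpha^{(1)}_{j_1+r_1}>\mu_{j_1+r_1}$ one gets $\alpha^{(1)}_{j_1}-j_1-\alpha^{(1)}_{j_1+r_1}+(j_1+r_1)<n$; monotonicity of $A$ down columns then yields $\lambda_t-t-\alpha^{(1)}_{j_1}+j_1\ge\alpha^{(1)}_t-t-\alpha^{(1)}_{j_1}+j_1>-n$ for all $t\le j_1+r_1$. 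For $t>j_1+r_1$ the paper uses that the first strip of the horizontal chain occupies the lowest rows of $\lambda/\mu$, so $\lambda_t=\alpha^{(1)}_t=\mu_t$ there, and a symmetric estimate gives those entries $<-n$. Thus the ``extra $1$'' you worry about never occurs, and $\det M(\lambda/\mu)=(-1)^{r_1}\det M(\lambda/\alpha^{(1)})$; since $\lambda/\alpha^{(1)}$ is again horizontal of weight $k-1$ (your observation about $(\lambda/\mu)\uparrow$ is exactly what is needed here), induction finishes. The missing ingredient in your outline is precisely this arithmetic: the equality $\alpha^{(1)}_{j_1}-\mu_{j_1+r_1}+r_1=n$ and the fact that nothing of $\lambda/\alpha^{(1)}$ lies below row $j_1+r_1$.
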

\begin{proof}
We argue by induction on $k.$ The case $k=1$ holds due to the classical Murnaghan--Nakayama rule. Assume it holds for $<k.$ Consider the case where $\lambda/\mu$ is a horizontal $n$-border strip of weight $k$ :
\begin{align*}
\mu=\alpha^{(0)}\subset\alpha^{(1)}\subset\alpha^{(2)}\subset\cdots\subset\alpha^{(k)}=\lambda
\end{align*}
where $\alpha^{(i)}/\alpha^{(i-1)}=(\underbrace{0,\cdots,0}\limits_{j_i-1},a^{(i)}_{j_i},a^{(i)}_{j_i+1},\cdots,a^{(i)}_{j_i+r_i},
\underbrace{0,\cdots,0}\limits_{l-j_i-r_i})$ is an $n$-border strip. In particular, $\alpha^{(1)}/\mu$ is an $n$-border strip.
%and by \eqref{ch2-identity2} it satisfies
 %\begin{align}\label{ch2-se2-eq1}
%\left\{\begin{array}{ll}
%a^{(1)/(0)}_{h}=\alpha^{(1)}_{h}-\alpha^{(1)}_{h+1}+1& \text {if $h=j_{1},j_{1}+1,\ldots,j_{1}+r_{1}-1$};\\
%a^{(1)/(0)}_{j_{1}+r_{1}}=\alpha^{(1)}_{j_{1}+r_{1}}-\mu_{j_{1}+r_{1}}.
%\end{array}\right.
%\end{align}
It follows from \eqref{e:identity2} that
\begin{align}\label{ch2-se2-eq2}
\alpha^{(1)}_{j_{1}}-\mu_{j_{1}+r_{1}}+r_{1}=\sum_{h=j_{1}}^{j_{1}+r_{1}}a^{(1)}_{h}=n.
\end{align}
 By Prop. \ref{t:transform} $(1)$, we have $\A(\lambda/\mu)=C_{j_1,j_1+r_1}(\A(\lambda/\alpha^{(1)})),$ which means
\begin{align*}
\c_{j_{1}+r_{1}}(\A(\lambda/\mu))=\c_{j_{1}}(\A(\lambda/\alpha^{(1)}))+(n, \cdots, n)^T.
\end{align*}
By definition,
\begin{align*}
\c_{j_{1}}(\A(\lambda/\alpha^{(1)}))=\left(\begin{array}{c}
\lambda_{1}-1-\alpha^{(1)}_{j_{1}}+j_{1}\\
\vdots\\
\lambda_{j_{1}+r_{1}}-j_{1}-r_{1}-\alpha^{(1)}_{j_{1}}+j_{1}\\
\lambda_{j_{1}+r_{1}+1}-j_{1}-r_{1}-1-\alpha^{(1)}_{j_{1}}+j_{1}\\
\vdots\\
\lambda_{l}-l-\alpha^{(1)}_{j_{1}}+j_{1}\\
\end{array}\right).
\end{align*}
We claim that all entries in $\c_{j_{1}}(\A(\lambda/\alpha^{(1)}))$ are not equal to $-n$. This is shown by distinguishing two cases whether the entries are above the $ (j_{1}+r_{1})$th row or not.

%Therefore, by the definitions of \eqref{e:m_{ij}} and \eqref{e:a_{ij}}, it is necessary to consider whether there is an entry equal to $-n$ in the column vector $c_{j_{1}}(A(\lambda/\alpha^{(1)}))$.\\
%The $j_{1}$th column vector of $A(\lambda/\alpha^{(1)})$ is
%\begin{align*}
%c_{j_{1}}(A(\lambda/\alpha^{(1)}))=\left(\begin{array}{c}
%\lambda_{1}-1-\alpha^{(1)}_{j_{1}}+j_{1}\\
%\vdots\\
%\lambda_{j_{1}+r_{1}}-j_{1}-r_{1}-\alpha^{(1)}_{j_{1}}+j_{1}\\
%\lambda_{j_{1}+r_{1}+1}-j_{1}-r_{1}-1-\alpha^{(1)}_{j_{1}}+j_{1}\\
%\vdots\\
%\lambda_{l}-l-\alpha^{(1)}_{j_{1}}+j_{1}\\
%\end{array}\right).
%\end{align*}
\begin{enumerate}[label={\bf Case \arabic*.}]
\item  When $1\leq t \leq j_{1}+r_{1}$, then we have
\begin{align*}
\lambda_{t}-t-\alpha^{(1)}_{j_{1}}+j_{1}&\geq\lambda_{j_{1}+r_{1}}-j_{1}-r_{1}-\alpha^{(1)}_{j_{1}}+j_{1}~~~(\text{ by property {\bf (A)} of $A(\lambda/\alpha^{(1)})$ })\\
&\geq \alpha^{(1)}_{j_{1}+r_{1}}-r_{1}-\alpha^{(1)}_{j_{1}}~~~~(\text { by $\lambda_{j_{1}+r_{1}}\geq\alpha^{(1)}_{j_{1}+r_{1}}$})\\
&>\mu_{j_{1}+r_{1}}-r_{1}-\alpha^{(1)}_{j_{1}}=-n~~~~(\text { by $\alpha^{(1)}_{j_{1}+r_{1}}>\mu_{j_{1}+r_{1}}$ and \eqref{ch2-se2-eq2}}).
\end{align*}
 \item When $j_{1}+r_{1}+1\leq t\leq l$, by Remark \ref{R:chacter} $(1)$, the ending box of $\alpha^{(1)}/\mu$ is the southwesternmost box in $\la/\mu$, which means $\lambda_{j_{1}+r_{1}+1}=\mu_{j_{1}+r_{1}+1}\leq\mu_{j_{1}+r_{1}}$.
\begin{align*}
\lambda_{t}-t-\alpha^{(1)}_{j_{1}}+j_{1} &\leq \lambda_{j_{1}+r_{1}+1}-j_{1}-r_{1}-1-\alpha^{(1)}_{j_{1}}+j_{1}~~~(\text{ by property {\bf (A)} of $A(\lambda/\alpha^{(1)})$})\\
&\leq\mu_{j_{1}+r_{1}}-r_{1}-1-\alpha^{(1)}_{j_1}~~~(\text { by $\lambda_{j_{1}+r_{1}+1}=\mu_{j_{1}+r_{1}+1}\leq\mu_{j_{1}+r_{1}}$})\\
&=-(\alpha^{(1)}_{j_1}-\mu_{j_{1}+r_{1}}+r_{1})-1=-n-1<-n~~~(\text {by \eqref{ch2-se2-eq2}}).
\end{align*}
\end{enumerate}
This together with $\A(\lambda/\mu)=C_{j_1,j_1+r_1}(\A(\lambda/\alpha^{(1)}))$ implies that $\M(\lambda/\mu)$ can be obtained by exchanging the $j_1$th column in $\M(\lambda/\alpha^{(1)})$ $r_1$ times to the right. Thus $\det(\M(\lambda/\mu))=(-1)^{r_1}\det(\M(\lambda/\alpha^{(1)}))$. By the inductive hypothesis, we have $$\det(\M(\lambda/\mu))=(-1)^{r_1}\det(\M(\lambda/\alpha^{(1)}))=(-1)^{r_1}\sgn(\lambda/\alpha^{(1)})=\sgn(\lambda/\mu),$$
completing the proof.
\end{proof}

%In summary, the elements of the column vector $c_{j_{1}}(A(\lambda/\alpha^{(1)}))$ are not equal to $-n$.
%Thus $M(\lambda/\mu)$ can be obtained by exchanging the elements of the $j_1$th column in $M(\lambda/\alpha^{(1)})$ $r_1$ times to the right.  So $\rm{det}(M(\lambda/\mu))=(-1)^{r_1}\rm{det}(M(\lambda/\alpha^{(1)}))$.
%By the inductive hypothesis, we have $$\rm{det}(M(\lambda/\mu))=(-1)^{r_1}sgn(\lambda/\alpha^{(1)})=sgn(\lambda/\mu).$$
%\end{proof}

Let $\lambda=(\lambda_1, \ldots, \lambda_l)\supset \mu=(\mu_1, \ldots, \mu_l)$ be two partitions. If $\D(\la/\mu)\neq\varnothing$, by Remark \ref{R:chacter}, we can find a longest horizontal $n$-border strip chain $\underline{\beta}$ (denote its weight by $\tau=(\tau_1,\tau_2,\cdots,\tau_m)$):
\begin{align*}
\mu=\beta^{(0)}\subset\beta^{(1)}\subset\cdots\subset\beta^{(m)}=\lambda.
\end{align*}
Denote by $\N(\underline{\beta})$ the set of $1\leq i\leq m$ such that $\tau_i>1$ and the ending box of $\beta^{(i)}/\beta^{(i-1)}$ occupies the bottom row of $\la/\mu$.

\begin{lem}\label{L:zero}
With the above notations, if $\N(\underline{\beta})\neq\varnothing$, then $\det(\M(\la/\mu))=0$.
\end{lem}
\begin{proof}
Let us say the bottom row of $\la/\mu$ is numbered $r$. Then $\A(\la/\mu)$ can be written as
\begin{align*}
 \A(\lambda/\mu)=\left(\begin{array}{cc}
\A_{1}(\lambda/\mu) & \ast\\
<0 & \A_{2}(\lambda/\mu)
\end{array}\right)\Longrightarrow \M(\lambda/\mu)=\left(\begin{array}{cc}
\M_{1}(\lambda/\mu) & \ast\\
0 & \M_{2}(\lambda/\mu)
\end{array}\right)
\end{align*}
where $\A_{1}(\lambda/\mu)$ is the $r\times r$ submatrix, $\A_{2}(\lambda/\mu)$ is a submatrix with principal diagonal zero and lower triangular negative. So
$\M_{2}(\lambda/\mu)$ is an upper triangular matrix with principal diagonal of $1$, and thus $\det(\M(\lambda/\mu))=\det(\M_{1}(\lambda/\mu)$). Without loss of generality, we can assume the bottom rows of $\la/\mu$ and $\la$ coincide, i.e., $r=l(\la)$.

Choose $j\in\N(\underline{\beta})$. It follows from Remark \ref{R:chacter} $(2)$ that $\tau_j>1$ and $\beta^{(j)}/\beta^{(j-1)}$ consists of $\tau_j$ disjoint $n$-border strips such that the starting box of the lower border strip is directly below the ending box of the upper border strip (denote these $n$-border strips by $\Theta_1,\Theta_2,\cdots,\Theta_{\tau_j}$ from the top down). We further assume that the starting box of $\Theta_{\tau_j}$ is in the $(q+1)$th row of $\la/\mu$. Since the ending box of $\Theta_{\tau_j-1}$ is directly above the starting box of $\Theta_{\tau_j}$,
\begin{align}\label{e:I1}
\beta^{(j)}_{q+1}=\beta^{(j-1)}_{q}+1.
\end{align}
By \eqref{e:identity2},
\begin{align}\label{e:I2}
\beta^{(j)}_{q+1}-\beta^{(j-1)}_{r}+r-q-1=|\Theta_{\tau_j}|=n.
\end{align}
Combining \eqref{e:I1} with \eqref{e:I2}, we have
\begin{align}\label{e:I3}
\beta^{(j-1)}_q-\beta^{(j-1)}_{r}+r-q=n.
\end{align}
By definition, we have
\begin{align}\label{e:I4}
\begin{split}
&\A(\beta^{(j)}/\beta^{(j-1)})_{rr}-\A(\beta^{(j)}/\beta^{(j-1)})_{rq}\\
=&\beta^{(j)}_{r}-\beta^{(j-1)}_{r}-(\beta^{(j)}_{r}-\beta^{(j-1)}_{q}-r+q)\\
=&\beta^{(j-1)}_q-\beta^{(j-1)}_{r}+r-q=n \quad\text{(by \eqref{e:I3})}.
\end{split}
\end{align}
It follows from property {\bf B} of $\A(\beta^{(j)}/\beta^{(j-1)})$ that
\begin{align}\label{e:I5}
\c_{r}(\A(\beta^{(j)}/\beta^{(j-1)}))-\c_{q}(\A(\beta^{(j)}/\beta^{(j-1)}))=(n,n,\cdots,n)^{T}.
\end{align}
The choice of $j$ forces that there exists at least one box of $\beta^{(j)}/\beta^{(j-1)}$ in the bottom row of $\la/\mu$, which implies
\begin{align}
\A(\beta^{(j)}/\beta^{(j-1)})_{rr}=\beta^{(j)}_{r}-\beta^{(j-1)}_{r}\geq1.
\end{align}
Together with \eqref{e:I4} this yields that
\begin{align}\label{e:I6}
\A(\beta^{(j)}/\beta^{(j-1)})_{rq}=\A(\beta^{(j)}/\beta^{(j-1)})_{rr}-n\geq1-n>-n.
\end{align}
By combining Eq. \eqref{e:I6}, property {\bf A} of $\A(\beta^{(j)}/\beta^{(j-1)})$ and Eq. \eqref{e:I5}, we conclude that
all entries in $q$th column and $r$th column of $\A(\beta^{(j)}/\beta^{(j-1)})$ are greater than $-n$ and the difference between these two columns is $n$. By Cor. \ref{t:transform2}, $\A(\lambda/\mu)$ can
be obtained by a series of the row transformations and the column transformations of $\A(\beta^{(j)}/\beta^{(j-1)})$, which means there exists two columns in $\A(\lambda/\mu)$ such that the elements in these two columns are all greater than $-n$ and their difference is a multiple of $n$. Thus there are always two identical columns in $\M(\lambda/\mu)$, this derives $\det(\M(\lambda/\mu))=0.$
\end{proof}

\begin{thm}\label{t:nonhorizontal}
 Let $\mu\subset\lambda$ be two partitions with $|\la/\mu|=kn$. If $\lambda/\mu$ is not a horizontal $n$-border strip of weight $k$, then $\det(\M(\lambda/\mu))=0.$
\end{thm}
\begin{proof}
We check this by induction on $k$. The initial step can be derived by the classical Murnaghan-Nakayama rule. Assume this holds for all cases $k^{'}<k$. Now we consider the case $k^{'}=k$. The statement at the start of this subsection tells us it suffices to consider the case $\D(\la/\mu)\neq\varnothing$. In this case, we can find a longest horizontal $n$-border strip chain $\underline{\beta}$ of weight $\tau=(\tau_1,\tau_2,\cdots,\tau_m)$:
\begin{align*}
\mu=\beta^{(0)}\subset\beta^{(1)}\subset\cdots\subset\beta^{(m)}=\lambda.
\end{align*}
such that there exists at least one $\tau_i>1$. If we denote $\Phi_i=\beta^{(i)}/\beta^{(i-1)}$ ($1\leq i\leq m$), then,
%by Remark \ref{R:horizontal},
the starting box of $\Phi_i$ is southwest of that of $\Phi_j$ if and only if $i<j$. By Lemma \ref{L:zero}, we have done if $\N(\underline{\beta})\neq\varnothing$. So we assume $\N(\underline{\beta})=\varnothing$. In this case, we claim that the ending box of $\Phi_1$ is exactly the southwesternmost box of $\la/\mu$. Indeed, if not, we can find a $\Phi_{c}$ such that the ending box (denote by $x$) of $\Phi_{c}$ is on the bottom row of $\la/\mu$ and the content of $x$ is strictly less than that of the starting box of $\Phi_{1}$, which forces $|\Phi_{c}|>n$ (equivalently $\tau_{c}>1$). This contradicts with $\N(\underline{\beta})=\varnothing$. We further have $|\Phi_1|=n$ by $\N(\underline{\beta})=\varnothing$. Similarly to the proof of Theorem \ref{t:horizontal}, we have $\det(\M(\la/\mu))=(-1)^{ht(\Phi_1)}\det(\M(\la/\beta^{(1)}))$. Note that $\underline{\beta}\setminus\Phi_1$ is a longest horizontal $n$-border strip chain of weight $\tau^{[1]}=(\tau_2,\cdots,\tau_m)$ for $\la/\beta^{(1)}$. Since $\tau_1=1$, there must exist a $\tau_i>1$ ($2\leq i\leq k$), which means $\la/\beta^{(1)}$ is not a horizontal $n$-border strip of weight $k-1$ by Prop. \ref{p:characterization}. By the induction hypothesis, $\det(\M(\la/\mu))=(-1)^{ht(\Phi_1)}\det(\M(\la/\beta^{(1)}))=0$. This completes the proof.
\end{proof}

The following result is immediate by combining Thm. \ref{t:horizontal} and Thm. \ref{t:nonhorizontal}.
\begin{cor}\label{t:dpM-N} The determinant-type plethystic Murnaghan-Nakayama rule can be written as
\begin{align}
V^{(n)}_{k}s_{\mu}=(p_{n}\circ h_{k})s_{\mu}=\sum_{\lambda\vdash |\mu|+nk}\det(\M(\lambda/\mu))s_{\lambda}
\end{align}
where $\det(\M(\lambda/\mu))$ admits the following combinatorial interpretation:
\begin{align}
\det(\M(\lambda/\mu))=
\begin{cases}
\sgn(\la/\mu) & \text{if $\lambda/\mu$ is a horizontal $n$-border strip of weight $k$};\\
0 & \text{otherwise}.
\end{cases}
\end{align}
This means the combinatorial plethystic Murnaghan-Nakayama rule (\ref{t:c}) is equivalent to our determinant-type one.
\end{cor}

%$\lambda$ runs through partitions such that $\lambda/\mu$ is a horizontal $n$-border strip of weight $k$, and $\rm{det}(M(\lambda/\mu))=\sgn(\lambda/\mu)$.
%\begin{proof} It follows from Theorem \ref{t:horizontal} and Theorem \ref{t:nonhorizontal}.
%\end{proof}
%\end{cor}
%\begin{rem}\label{t:pMN}  The combinatorial plethystic Murnaghan-Nakayama rule (\ref{t:c}) follows from Theorem \ref{t:dpM-N}. Therefore, we complete the proof of equivalence between our algebraic formulas and combinatorial form.
%\end{rem}

\subsection{Generalized Waring Formula}

Waring's formula expresses the power sum symmetric functions in terms of the elementary
symmetric functions. Merca \cite{M} proposed a generalization of Waring's formula
as follows.

Let $k, n$ and $r$ be three positive integers and let $x_{1}, x_{2}, \ldots, x_{n}$ be independent variables. Then
\begin{align}
e_{k}\left(x_{1}^{r}, x_{2}^{r}, \ldots, x_{n}^{r}\right)=(-1)^{k(r+1)} \sum_{\substack{\lambda \vdash k r \\ \ell(\lambda) \leq r}} \frac{M_{\lambda}^{(k, r)}}{\prod_{i=1}^{k r} m_{i}(\lambda) !} e_{\lambda}\left(x_{1}, x_{2}, \ldots, x_{n}\right),
\end{align}
where
$$
\begin{aligned}
M_{\left(\lambda_{1}, \lambda_{2}, \ldots, \lambda_{\ell}\right)}^{(k, r)}=&-\sum_{i=1}^{\ell-1} M_{\left(\lambda_{1}, \ldots, \lambda_{i-1}, \lambda_{i}+\lambda_{\ell}, \lambda_{i+1}, \ldots, \lambda_{\ell-1}\right)}^{(k, r)} \\
&+\left\{\begin{array}{ll}
r \cdot M_{\left(\lambda_{1}, \lambda_{2}, \ldots, \lambda_{\ell-1}\right)}^{\left(k-\lambda_{\ell} / r, r\right)}, & \text { if } \lambda_{\ell} \equiv 0 \quad(\bmod~ r), \\
0, & \text { otherwise },
\end{array}\right.
\end{aligned}
$$
with the initial condition $M_{\left(\lambda_{1}\right)}^{(k, r)}=r$.

Recall that the involution $\omega: \Lambda_{\mathbb{Q}} \rightarrow\Lambda_{\mathbb{Q}}$ \cite{Mac} is an isometry with respect the canonical inner product such that
\begin{align*}
\omega(e_{\lambda})=h_{\lambda}, \quad \omega(s_{\lambda})=s_{\lambda^{'}},\quad \omega(p_{\lambda})=(-1)^{|\lambda|-\ell(\lambda)}p_{\lambda} .
\end{align*}
So we have
\begin{align}
\omega(p_n \circ h_k)&=(-1)^{k(n-1)}p_n \circ e_k.
\end{align}
We now discuss a simpler method to compute the generalized
Waring coefficients as follows.
\begin{cor}\label{c:Waring} (Generalized Waring Formula) Let $n$ and $k$ be two positive integers, then
\begin{align}
p_n \circ e_k=(-1)^{k(n-1)}\sum_{\substack{\lambda \vdash n k \\ \ell(\lambda) \leq n}}\det(\M(\lambda))\det(e_{\lambda_{i}+j-i})_{i,j=1}^{\ell(\lambda)}
\end{align}
summed over all horizontal $n$-border strips of weight $k$.
\begin{proof}
It follows from Cor. \ref{t:dpM-N} and the Jacobi-Trudi formula.
\end{proof}
\end{cor}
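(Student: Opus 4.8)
The plan is to deduce the formula directly from the determinant-type plethystic Murnaghan--Nakayama rule (Corollary~\ref{t:dpM-N}), by specializing it to the empty partition and then transporting the resulting Schur expansion through the involution $\omega$.

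First I would record how $\omega$ intervenes. Since $\omega$ is an involutive isometry with $\omega(p_\rho)=(-1)^{|\rho|-\ell(\rho)}p_\rho$, expanding $h_k$ and $e_k$ in the power-sum basis gives the identity $\omega(p_n\circ h_k)=(-1)^{k(n-1)}(p_n\circ e_k)$ already displayed in the text; because $(-1)^{k(n-1)}=\pm1$ and $\omega^2=\mathrm{id}$, this is the same as
\[
p_n\circ e_k=(-1)^{k(n-1)}\,\omega(p_n\circ h_k).
\]

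Next I would apply Corollary~\ref{t:dpM-N} with $\mu$ the empty partition, so that $s_\mu=1$ and, writing $M(\lambda):=M(\lambda/\emptyset)$,
\[
p_n\circ h_k=\sum_{\lambda}\det M(\lambda)\,s_\lambda ,
\]
the sum being over partitions $\lambda\vdash nk$ for which $\lambda$ is a horizontal $n$-border strip of weight $k$, and $\det M(\lambda)=sgn(\lambda)=\pm1$. Applying $\omega$ term by term and using $\omega(s_\lambda)=s_{\lambda'}$ yields $p_n\circ e_k=(-1)^{k(n-1)}\sum_\lambda\det M(\lambda)\,s_{\lambda'}$, and the dual Jacobi--Trudi formula $s_{\lambda'}=\det\bigl(e_{\lambda_i-i+j}\bigr)_{i,j=1}^{\ell(\lambda)}$ then rewrites each summand in the asserted determinantal form.

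The one point requiring a separate (short) argument is the index set: the statement restricts the sum to $\lambda\vdash nk$ with $\ell(\lambda)\le n$, whereas the computation above produces the sum over all horizontal $n$-border strips of weight $k$, so I must check that $\ell(\lambda)\le n$ is automatic for such $\lambda$. Fixing a horizontal $n$-border strip chain of weight $(1^k)$ realizing $\lambda/\emptyset$, consider the $n$-ribbon of that chain containing the corner cell $(\ell(\lambda),1)$: its starting cell lies in $(\lambda/\emptyset)\uparrow$, which for $\mu=\emptyset$ is precisely the first row of $\lambda$, and since the ribbon is edge-connected from row $1$ down to row $\ell(\lambda)$ it meets each of those rows and so contains at least $\ell(\lambda)$ cells; as it has exactly $n$ cells, $\ell(\lambda)\le n$. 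This length bound is really the only mild obstacle; with it in hand, the statement is just a combination of Corollary~\ref{t:dpM-N}, the $\omega$-identity, and the Jacobi--Trudi formula.
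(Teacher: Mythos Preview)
Your proposal is correct and follows essentially the same route as the paper: apply Corollary~\ref{t:dpM-N} at $\mu=\emptyset$, transport through $\omega$ using the identity displayed just before the corollary, and invoke the (dual) Jacobi--Trudi formula. You simply spell out details the paper leaves implicit, including the justification of the bound $\ell(\lambda)\le n$.
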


\section*{Acknowledgments}

The project is partially supported by Simons Foundation under
grant no. 523868 and NSFC grant 12171303.

\bigskip
\noindent{\bf Data Availability Statement}

All data generated during the study are included in the article.

\bigskip

\bibliographystyle{plain}

\end{document}